\newcommand\IZ{\mathbb{Z}}
\newcommand\IC{\mathbb{C}}
\newcommand\IN{\mathbb{N}}
\newcommand\IQ{\mathbb{Q}}
\newcommand\GL{\mathrm{GL}}
\newcommand\Aut{\mathrm{Aut}}
\newcommand\End{\mathrm{End}}
\newcommand\Out{\mathrm{Out}}
\newcommand\HE{\mathrm{HE}}
\newcommand\iso{\cong}
\newcommand\Mod{\mathrm{Mod}}
\newtheorem{theorem}{Theorem}[section]
\newtheorem{prop}[theorem]{Proposition}
\theoremstyle{definition}
\newtheorem{mydef}[theorem]{Definition}
\theoremstyle{definition}
\theoremstyle{definition}
\newtheorem{lemma}[theorem]{Lemma}
\theoremstyle{definition}
\theoremstyle{definition}
\newtheorem{corollary}[theorem]{Corollary}
\theoremstyle{definition}
\newtheorem{obs}{Observation}
\theoremstyle{definition}
\newtheorem{case}{Case}
\theoremstyle{definition}
\newtheorem{manualquestioninner}[theorem]{Question}
\newenvironment{manualquestion}[1]{%
  \manualquestioninner
}{\endmanualquestioninner}
\theoremstyle{definition}
\newtheorem{bsp}[theorem]{Example}
\begin{document}

\title[Free Group Automorphisms via Virtual Homology Representations]{Detecting Free Group Automorphisms via Virtual Homology Representations}
\author{Emre Yüksel}
\address{University of Munich, Mathematisches Institut, Theresienstraße 39, 80333 Munich, Germany}
\email{yueksel.emre@gmail.com}

\subjclass[2020]{57M10, 57M60, 57K20}
	
\begin{abstract}
Let $F_n= F\langle x_1,...,x_n\rangle$ denote the free group of rank $n\ge 2$ and let $\mathrm{End}(F_n)$ be the endomorphism monoid of $F_n$. We show that automorphisms of $F_n$ are detected via the $\mathrm{End}(F_n)$-action on the first integral homology of finite characteristic covers of the wedge of $n\ge 2$ circles $R_n$. This gives a homological characterization of homotopy equivalences of $R_n$ that we utilize to show that $\mathrm{End}(F_n)$ is asymptotically linear. We extend these results by showing that the $\Out(F_n)$-action on the homology of iterated covers of a punctured surface $\Sigma_g^b$ of the same homotopy type as $R_n$ detects homeomorphisms of $\Sigma_g^b$ in homotopy classes of homotopy equivalences of $R_n$. \end{abstract} 

\maketitle

\tableofcontents

\section{Introduction}

Let $F_n$ denote the free group of rank $n$. It is a natural question in the theory of groups whether a given abstract group $G$ is linear, i.e. whether there exists a faithful representation of $G$ into the general linear group $\GL_n(\IC)$. For the automorphism group $\Aut(F_n)$ it was a long-standing open problem whether even $\Aut(F_2)$ is linear \cite{Dokovic}. It was eventually confirmed by Krammer \cite{Krammer} by showing that the braid group $B_4$ is linear as it was established in \cite{Formanek2} that $B_4$ admits a faithful representation into $\GL_n(\IC)$ if and only if $\Aut(F_2)$ admits a faithful representation into $\GL_{2n}(\IC)$. For $n\ge 3$ on the other hand, Formanek and Procesi \cite{Formanek} proved that $\Aut(F_n)$ is not linear.\\

For surfaces, it is still an open problem whether the mapping class group is linear. While this was answered positively by Bigelow and Budney \cite{Budney} and in the same year by Korkmaz \cite{Korkmaz} for the mapping class group $\Mod(\Sigma)$ of a surface $\Sigma$ of genus $g=2$, it is still an open question for surfaces of genus $g\ge 3$. A natural way to study $\Mod(\Sigma)$ is via its \emph{homology representation}, that is, the action of $\Sigma$-homeomorphisms on the first integral homology of $\Sigma$. The homology representation fits into a short exact sequence $$1\to \mathcal I_g\to \Mod(\Sigma)\to \operatorname{Sp}_{2g}(\IZ)\to 1$$ where $\mathcal I_g$ denotes the \emph{Torelli group}. While the homology representation is useful, much of the information about mapping classes is lost as the Torelli group $\mathcal I_g$ already exhibits the difficulties that appear when examining the mapping class group. Koberda \cite{Koberda} proved that much of this information can be recovered by examining the \emph{virtual homology representations} instead. That is, by studying the $\Mod(\Sigma)$-action on the first homology of finite regular covers of $\Sigma$, Koberda established that $\Mod(\Sigma)$ is asymptotically linear.\\

In this article we discuss the analogous situation for graphs and the automorphism group $\Aut(F_n)$. We identify $F_n$ with the fundamental group of the wedge of $n\ge 2$ circles $R_n$, i.e. the wedge of $n$ copies of $S^1$ based at the single basepoint $*$. We denote by $\mathrm{HE}(R_n,*)$ the group of basepoint preserving homotopy equivalences of $R_n$ and we identify the automorphism group $\mathrm{Aut}(F_n)$ with the group $\mathrm{HE}(R_n,*)$ modulo  homotopy, i.e $$\mathrm{Aut}(F_n)\iso \pi_0(\mathrm{HE}(R_n,*)).$$ If the basepoint is ignored, then the group $\pi_0(\HE(R_n))$ of homotopy classes of \emph{unbased} homotopy equivalences of $R_n$ is identified with the outer automorphism group $\Out(F_n)$ instead.\\

 For a topological space $X$, we denote its fundamental group often simply by a representative $\pi_1(X)$ of its isomorphism class with respect to the change of basepoint isomorphism. Whenever the choice of a basepoint $x_1\in X$ is required, we mention it explicitly and denote its fundamental group by $\pi_1(X,x_1)$. We say that $\pi_1(X)$ is a \emph{characteristic subgroup} of $F_n$, if it is invariant under the action of any $\psi\in \Aut(F_n)$. Whenever $\pi_1(X)\le F_n$ is characteristic, we say that $p\colon X\to R_n$ is a \textit{characteristic cover}. Observe that characteristic covers are regular and we therefore have a short exact sequence of groups $$1\to \pi_1(X)\to F_n\to \Gamma\to 1$$ where $\Gamma\iso F_n/\pi(X)$ is the associated deck group and we similarly refer to $\Gamma$ as a \emph{characteristic quotient}.  Any characteristic subgroup of $F_n$ with quotient $\Gamma$ naturally determines a well defined homomorphism $\Aut(F_n)\to \Aut(\Gamma)$. Each $\psi\in \Aut(F_n)$ therefore induces an automorphism of $\Gamma$ which will also be denoted by $\psi$. It will be clear from the context whether we consider an automorphism of $F_n$ or $\Gamma$.\\
 For any characteristic cover $X\to R_n$, the lifting criterion asserts that lifts of homotopy equivalences of $R_n$ to $X$ always exist but are well defined only up to deck transformations unless a basepoint in $X$ is chosen. Accordingly, we say that $p\colon X\to R_n$ is a \emph{based cover} whenever we fix a \emph{preferred basepoint} $x_1$ in the fiber of the basepoint $*$ of $R_n$.\\ 
 
We beginn by studying the \textit{virtual homology representations} of $\Aut(F_n)$. That is, for any finite characteristic (based) cover $(X,x_1)\to (R_n,*)$ we have a representation $$\Aut(F_n)\to \Aut(H_1(X,\IZ))= \GL(H_1(X,\IZ))$$

that is obtained by lifting an automorphism $\psi\in \Aut(F_n)$ to a representative $f\in \HE(R_n,*)$ of its associated homotopy class, which in turn lifts to an element $f'\in \HE(X,x_1)$, and then examining its action on the homology $H_1(X,\IZ)$. In this setting we prove the following graph-theoretic analog of \cite[Theorem 1.1]{Koberda}.

\begin{prop}\label{asymptotic_aut}Let $\psi\in \mathrm{Aut}(F_n)$. Let $\Gamma$ be a finite characteristic quotient of $F_n$ and let $X\to R_n$ be the associated based cover. If $1\not=\psi\in \mathrm{Aut}(\Gamma)$, then $\psi$ acts nontrivially on $H_1(X,\mathbb{Z})$. 
\end{prop}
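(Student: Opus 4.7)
The plan is to reduce the proposition to the statement that the deck transformation group $\Gamma$ acts faithfully on $H_1(X,\IZ)$, and then to verify that faithfulness by directly analyzing the cellular chain complex of $X$ as a $\IZ[\Gamma]$-module. First I would pick a representative $f\in \HE(R_n,*)$ of the homotopy class $\psi$. Because $K:=\pi_1(X,x_1)$ is characteristic in $F_n$ and the basepoint is fixed, we have $f_*(K)=K$, so the lifting criterion produces a unique basepoint preserving lift $f'\in \HE(X,x_1)$; by construction, the action of $\psi$ on $H_1(X,\IZ)$ in question is $f'_*$.

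The central structural input is the conjugation identity
$$f'\circ \gamma = \psi(\gamma)\circ f' \quad \text{for every }\gamma\in \Gamma.$$
To establish it, both sides are seen to be lifts of $f\circ p\colon X\to R_n$, so it suffices to check that they agree on $x_1$; this one does by lifting a loop representing $\gamma$ from $R_n$ to $X$ and tracing its endpoint, which by definition lies at $\gamma\cdot x_1$. Passing to homology, the identity becomes $f'_*\gamma_*=\psi(\gamma)_* f'_*$. Assuming for contradiction that $f'_*$ is the identity on $H_1(X,\IZ)$, it follows that $(\psi(\gamma)\gamma^{-1})_*$ acts trivially on $H_1(X,\IZ)$ for every $\gamma\in \Gamma$.

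It then remains to show that $\Gamma$ acts faithfully on $H_1(X,\IZ)$. The cellular chain complex of $X$ gives an exact sequence of $\IZ[\Gamma]$-modules
$$0\to H_1(X,\IZ)\to \IZ[\Gamma]^n\to \IZ[\Gamma]\to \IZ\to 0,$$
with the factor $n$ accounting for one edge orbit per generator of $F_n$. Tensoring with $\IQ$ and invoking Maschke's theorem, a character computation yields $H_1(X,\IQ)\iso \IQ\oplus \IQ[\Gamma]^{n-1}$ as $\IQ[\Gamma]$-modules; since $n\ge 2$, the summand $\IQ[\Gamma]$ is the regular representation, which is faithful for any finite group. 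Hence $\Gamma$ acts faithfully on $H_1(X,\IQ)$ and a fortiori on $H_1(X,\IZ)$. Combined with the previous step this forces $\psi(\gamma)=\gamma$ for every $\gamma\in \Gamma$, contradicting the hypothesis that $\psi\in \Aut(\Gamma)$ is nontrivial. The hard part will be the basepoint bookkeeping in the conjugation identity --- making sure that the element of $\Gamma$ that conjugates $\gamma$ through $f'$ is exactly $\psi(\gamma)$, and not some twist of it --- while the remainder of the argument is either formal or standard representation theory.
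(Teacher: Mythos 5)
Your proof is correct and follows essentially the same route as the paper: it reduces the claim to the faithfulness of the $\Gamma$-action on $H_1(X,\IZ)$ via the conjugation identity $f'\circ\gamma=\psi(\gamma)\circ f'$ for lifts, and proves that faithfulness through the Chevalley--Weil decomposition $H_1(X,\IQ)\cong\IQ\oplus\IQ[\Gamma]^{n-1}$. The only cosmetic differences are that the paper packages the equivariance step as a separate lemma (Lemma~\ref{equi}) and obtains the same character identity via the Lefschetz fixed-point theorem rather than directly from the exact sequence of $\IZ[\Gamma]$-modules coming from the cellular chain complex.
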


We say that the subgroup $\pi_1(X)\le F_n$ (and thus the deck group $\Gamma$) is \emph{fully characteristic} if $\pi_1(X)$ also remains invariant under endomorphisms of $F_n$ and we denote by $\End(F_n)$ the monoid of $F_n$-endomorphisms.  By restricting the $\Aut(F_n)$-action on the homology of based covers to $\End(F_n)$,  we obtain a virtual homology representation of $\End(F_n)$ similar to $\Aut(F_n)$ by lifting an element $\phi\in \End(F_n)$ to a representative of the homotopy class of the basepoint preserving graph endomorphism $(X,x_1)\to (X,x_1)$ of a fully characteristic based cover $(X,x_1)\to (R_n,*)$ and study its action on the integral homology $H_1(X,\IZ)$.\\

A fundamental property of finitely generated free groups is that they are Hopfian, that is:  every epimorphism of $F_n$ is an automorphism. Our first main result shows that the virtual homology representations of $\End(F_n)$ detect automorphisms of $F_n$.

 \begin{theorem}\label{main_theorem}
	Let $\phi\in \mathrm{End}(F_n)$ and let $\mathcal K$ be the class of fully characteristic finite based covers of $R_n$. If $\phi$ acts by epimorphisms on $H_1(X,\IZ)$ for every  $X\in\mathcal K$, then $\phi$ is surjective.	
 \end{theorem}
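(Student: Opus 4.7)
The plan is to argue by contrapositive: assuming $\phi$ fails to be surjective, construct some $X \in \mathcal{K}$ on which $\phi_\ast\colon H_1(X,\IZ) \to H_1(X,\IZ)$ is not an epimorphism. The argument decomposes into three pieces: first, construct a finite fully characteristic quotient $\Gamma$ of $F_n$ on which the induced endomorphism $\bar\phi$ is non-surjective; second, use the $\bar\phi$-equivariance of $\phi_\ast$ with respect to the deck $\Gamma$-action to locate elements in $\ker \phi_\ast$; third, show that these elements are nonzero via the $\IZ[\Gamma]$-module structure of $H_1(X,\IZ)$.

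For the first piece, fix $g \in F_n \setminus \phi(F_n)$. By the subgroup separability (LERF) of $F_n$, the finitely generated subgroup $\phi(F_n)$ is closed in the profinite topology, so there is a finite-index $H \le F_n$ with $\phi(F_n) \subseteq H$ and $g \notin H$; the finite permutation action on $F_n/H$ yields a finite quotient $q \colon F_n \twoheadrightarrow Q$ with $q(g) \notin q(\phi(F_n))$. Set
\[
K \;=\; \bigcap_{f \in \operatorname{Hom}(F_n,Q)} \ker f.
\]
Finite generation of $F_n$ and finiteness of $Q$ force $[F_n:K] < \infty$, and for any $\psi \in \End(F_n)$ and $f \in \operatorname{Hom}(F_n,Q)$ the composition $f \circ \psi$ is again in $\operatorname{Hom}(F_n,Q)$, so $\psi(K) \subseteq K$; hence $K$ is fully characteristic. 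Writing $\Gamma := F_n/K$, the associated based cover $X \to R_n$ lies in $\mathcal{K}$. Since $q$ factors through $\Gamma$, surjectivity of $\bar\phi\colon \Gamma \to \Gamma$ would entail $q(\phi(F_n)) = Q$, contradicting the choice of $q$. Hence $\bar\phi$ is non-surjective, and finiteness of $\Gamma$ forces it to be non-injective; pick some $1 \ne \gamma \in \ker \bar\phi$. Any lift of $\phi$ to $X$ is then $\bar\phi$-equivariant with respect to the deck action, so $\phi_\ast(\gamma \cdot v) = \bar\phi(\gamma) \cdot \phi_\ast(v) = \phi_\ast(v)$ for all $v \in H_1(X,\IZ)$, and thus $(\gamma - 1)\cdot H_1(X,\IZ) \subseteq \ker \phi_\ast$.

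The main obstacle is the third piece: showing $(\gamma - 1)\cdot H_1(X,\IZ) \ne 0$, equivalently, that the deck action on $H_1(X,\IZ)$ is faithful. From the cellular chain complex of $X$ one has an exact sequence of $\IZ[\Gamma]$-modules
\[
0 \;\to\; H_1(X,\IZ) \;\to\; \IZ[\Gamma]^n \;\to\; I_\Gamma \;\to\; 0,
\]
with $I_\Gamma$ the augmentation ideal of $\IZ[\Gamma]$. Rationalizing and applying Maschke's theorem splits the sequence and yields the $\IQ[\Gamma]$-isomorphism $H_1(X,\IQ) \cong \IQ[\Gamma]^{n-1} \oplus \IQ$; since $n \ge 2$, this contains a copy of the regular representation $\IQ[\Gamma]$, whose action on itself is faithful. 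Thus $\gamma$ acts non-trivially on $H_1(X,\IQ)$ and a fortiori on $H_1(X,\IZ)$, giving $\ker \phi_\ast \ne 0$. As $H_1(X,\IZ)$ is free abelian of finite rank, $\phi_\ast$ then fails to be injective, and therefore cannot be surjective, contradicting the hypothesis.
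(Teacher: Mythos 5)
Your proof is correct, and it takes a genuinely different route from the paper's. The paper argues geometrically: a non-surjective $\phi$ has image $E = \phi(F_n)$ a proper finitely generated subgroup, realized by a Stallings immersion $\mathcal{G} \to R_n$; completing $\mathcal{G}$ to a finite cover $\widehat{\mathcal{G}} \to R_n$ by adding edges, the algebraic intersection with an added edge defines a nonzero class in $H^1(\widehat{\mathcal{G}},\IZ)$ that vanishes on $H_1(\mathcal{G},\IZ)$, so the image of $\phi_*$ on $H_1(\widehat{\mathcal{G}},\IZ)$ lies in the proper submodule $H_1(\mathcal{G},\IZ)$. Your argument is instead purely group- and representation-theoretic: you separate a missing element $g$ from $\phi(F_n)$ in a finite quotient via Hall's subgroup separability theorem, manufacture a fully characteristic finite-index $K$ by intersecting kernels of all homomorphisms to that quotient, deduce that the induced $\bar\phi$ on the finite deck group $\Gamma = F_n/K$ is non-surjective and hence non-injective, and then combine $\bar\phi$-equivariance of $\phi_*$ with faithfulness of the $\Gamma$-action on $H_1(X,\IZ)$ (the paper's Lemma~\ref{Gaschuetz_Kor}, proved by essentially the same Chevalley--Weil computation you give) to exhibit a nonzero element of $\ker\phi_*$. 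A genuine advantage of your route is that it explicitly produces a cover lying in $\mathcal K$: the paper's $\widehat{\mathcal{G}}$ is $\phi$-invariant (since $\phi(G)\subseteq E\subseteq G$) but is not argued to be fully characteristic, so your version matches the stated hypothesis, which quantifies only over fully characteristic covers, more cleanly. What the paper's approach buys in exchange is a concrete geometric mechanism and avoidance of both LERF and the equivariance bookkeeping.
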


If $\Gamma$ is a fully characteristic quotient of $F_n$, then analogous to the case of $\Aut(F_n)$, any $\phi\in \End(F_n)$ induces a well defined map $\phi\in \End(\Gamma)$. As a consequence of Theorem \ref{main_theorem}, we show that the virtual homology representations are asymptotically faithful.
\begin{corollary}\label{endo_faithful} Let $\phi\in \End(F_n)$ and let $X\to R_n$ be a finite fully characteristic based cover of graphs with deck group $\Gamma$. If $1\not=\phi\in \End(\Gamma)$, then $\phi$ acts nontrivially on $H_1(X,\IZ)$. \end{corollary}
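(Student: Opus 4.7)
The plan is to argue by contradiction: assume $\phi$ acts trivially on $H_1(X, \IZ)$ while $1 \neq \bar\phi \in \End(\Gamma)$, writing $\bar\phi$ for the induced endomorphism of $\Gamma$. I would derive a contradiction by leveraging the semi-equivariance between $\phi$ and the $\Gamma$-action on $H_1(X, \IZ)$, together with faithfulness of this action.

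Writing $K := \pi_1(X)$, so that $H_1(X, \IZ) = K^{\mathrm{ab}}$ and $\Gamma = F_n/K$ acts by conjugation via $\gamma \cdot [k] = [gkg^{-1}]$ for $\gamma = gK$, full characteristicity of $K$ ensures $\phi$ restricts to $K$, and applying $\phi$ to $gkg^{-1}$ yields the semi-equivariance
$$\phi_*(\gamma \cdot [k]) = \bar\phi(\gamma) \cdot \phi_*([k]).$$
Under the assumption $\phi_* = \operatorname{id}$, this collapses to $\gamma \cdot [k] = \bar\phi(\gamma) \cdot [k]$ for every $\gamma \in \Gamma$ and every $[k] \in H_1(X, \IZ)$. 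Hence $\gamma^{-1}\bar\phi(\gamma)$ lies in the kernel of the $\Gamma$-representation on $H_1(X, \IZ)$ for every $\gamma$, and if this representation is faithful we conclude $\bar\phi = \operatorname{id}$, contradicting the hypothesis.

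The principal obstacle is therefore to establish faithfulness of the $\Gamma$-representation on $H_1(X, \IZ)$. I would verify this via the Lefschetz fixed-point theorem: since $\Gamma$ acts freely on the finite graph $X$, each nontrivial $\gamma$ has empty fixed-point set and Lefschetz number zero. On the other hand, using the multiplicativity $\chi(X) = |\Gamma|\chi(R_n) = |\Gamma|(1-n)$, if $\gamma$ were to act trivially on $H_1(X, \IQ)$ one would compute
$$L(\gamma) = 1 - \operatorname{tr}\!\bigl(\gamma_* \mid H_1(X, \IQ)\bigr) = 1 - \bigl(1 + |\Gamma|(n-1)\bigr) = -|\Gamma|(n-1),$$
which is nonzero since $n \geq 2$, a contradiction. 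Combined with the semi-equivariance step this completes the plan, and the same argument in fact recovers Proposition \ref{asymptotic_aut} as the special case $\phi \in \Aut(F_n)$, where full characteristicity of $K$ is automatic.
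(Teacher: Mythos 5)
Your proof is correct and takes a genuinely different and, in fact, more direct route than the paper's. The paper proves Corollary~\ref{endo_faithful} indirectly: it assumes $\phi$ acts trivially on $H_1(X,\IZ)$ for \emph{every} $X$ in the class $\mathcal K$, invokes Theorem~\ref{main_theorem} to conclude that $\phi$ is surjective (and hence, by Hopficity, an automorphism), and only then applies the automorphism-specific Lemma~\ref{equi} and Proposition~\ref{asymptotic_aut} to descend to triviality in $\Aut(\Gamma)$. That detour is needed because the reverse direction of Lemma~\ref{equi} as formulated relies on $\psi$ being an automorphism: the step from $\psi(w)\psi(\gamma)\psi(w)^{-1}\equiv w\psi(\gamma)w^{-1}$ to ``$\psi(w)w^{-1}$ acts as the identity'' implicitly requires $\psi(\gamma)$ to range over all of $H_1(X,\IZ)$. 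You sidestep this entirely by proving the correct semi-equivariance identity $\phi_*(\gamma\cdot[k])=\bar\phi(\gamma)\cdot\phi_*([k])$, valid for any endomorphism with $\phi(K)\subseteq K$, and then specializing $\phi_*=\operatorname{id}$ to get $\gamma\cdot[k]=\bar\phi(\gamma)\cdot[k]$ for \emph{all} $[k]$. Combined with the faithfulness of the $\Gamma$-action on $H_1(X,\IZ)$ (your Lefschetz computation is the same as the paper's Lemma~\ref{Gaschuetz_Kor}), this immediately forces $\bar\phi=\operatorname{id}$. Your approach works one cover at a time, needs no surjectivity, no Hopficity, and no appeal to Theorem~\ref{main_theorem}, and it unifies Proposition~\ref{asymptotic_aut} and Corollary~\ref{endo_faithful} into a single argument --- a genuine simplification of the paper's logic.
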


Putting these results into perspective, we note that the residual finiteness of $F_n$ implies that $F_n$ contains many characteristic subgroups of finite index since each $1\not=w\in F_n$ projects nontrivially to some finite characteristic quotient. We will explicitly construct such an exhausting sequence of fully characteristic finite index subgroups of $F_n$ via iterated mod $q$ homology covers of $R_n$. In light of Theorem \ref{main_theorem}, we ask whether the $F_n$-automorphisms could in fact also be detected via the action of $\End(F_n)$ on a sequence of exhausting quotients that are fully characteristic.

\begin{manualquestioninner}\label{q1}
 Given a sequence $\{\Gamma_i\}_{i\in I}$ of fully characteristic quotients that exhaust $F_n$ and on which $\phi\in \mathrm{End}(F_n)$ acts by epimorphisms for every $\Gamma\in \{\Gamma_i\}$. Is this sufficient to conclude that $\phi$ is an epimorphism?
\end{manualquestioninner}

We give a negative answer to Question \ref{q1} by constructing non-surjective endomorphisms of $F_n$ that act by epimorphisms on an exhausting sequence of canonical nilpotent quotients that are obtained from the lower central series of $F_n$. \\

In the final sections we extend our considerations to surfaces. To this end, given a finite regular cover $\Sigma'\to\Sigma$ of surfaces with deck group $\Gamma$ and any loop $x\in \pi_1(\Sigma)$, we say that $x'\subset \Sigma'$ is a \emph{preferred elevation} of $x$, if $x'$ is the lift of a minimal power of $x$ to the cover $\Sigma'$ at the preferred basepoint in $\Sigma'$ and we refer to the images of the $\Gamma$-orbit of $x'$ just as \textit{elevations}. We give a new combinatorial proof of an Abelian cover version of a celebrated result of Scott \cite{Scott1},\cite{Scott2} that shows that closed loops on a topological oriented surface elevate to simple loops in (finite) towers of regular covers.\\

We denote by $\Sigma_g^b$ the compact oriented surface with positive genus $g$ and $b\ge 1$ punctures that has the same homotopy type as the graph $R_n$ for $n=2g+b-1$ and consider a class $\mathcal S$ of finite regular covers of $\Sigma_g^b$. Our final result relates the virtual homology representations of free group automorphisms and the mapping class group of $\Sigma_g^b$ by considering the virtual homology representations of $\Out(F_n)$. We show that if $\psi\in \Out(F_n)$ acts on all finite characteristic covers of $\Sigma_g^b$ by preserving the algebraic intersection pairing, then the homotopy equivalence of $R_n$ that induces $\psi$ is contained in the free homotopy class of a homeomorphism of the surface $\Sigma_g^b$.

 \begin{theorem}\label{mcg}
Let $\mathcal S$ be the class of finite characteristic covers of $\Sigma_g^b$. If $\psi\in \Out(F_n)$ acts on $H_1(\Sigma',\IZ)$ by preserving the algebraic intersection form in every cover $\Sigma'\in \mathcal S$, then the homotopy equivalence of $R_n$ that represents $\psi$ is homotopic to a homeomorphism of $\Sigma_g^b$.
 \end{theorem}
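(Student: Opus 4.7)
The plan is to reduce Theorem \ref{mcg} to a Dehn-Nielsen-Baer type theorem for punctured surfaces. Such a theorem identifies $\Mod(\Sigma_g^b)$ with the subgroup $\Out^*(F_n) \le \Out(F_n)$ that preserves the \emph{peripheral structure}, i.e.\ the set $\{[c_1],\dots,[c_b]\}$ of conjugacy classes represented by loops around the punctures, up to inversion and permutation of the punctures. It therefore suffices to show: if $\psi\in\Out(F_n)$ preserves the algebraic intersection form on $H_1(\Sigma',\IZ)$ for every $\Sigma'\in\mathcal S$, then $\psi\in\Out^*(F_n)$.

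The bridge from algebraic data upstairs to geometric data downstairs is supplied by the combinatorial Scott-type theorem for elevations proven in the preceding section. Given any loop $x$ on $\Sigma_g^b$, there exists a cover $\Sigma'\in\mathcal S$ in which the preferred elevation $x'$ of $x$ is a \emph{simple} loop. In such a cover the geometric self-intersection number $i(x,x)$ can be recovered from algebraic intersection pairings of the form $\langle \gamma\cdot x',\,x'\rangle$ as $\gamma$ ranges over a set of coset representatives of the deck group, since in the simple regime each transverse intersection of $x$ with itself on $\Sigma_g^b$ lifts to exactly one transverse intersection between distinct elevations of $x'$, and algebraic equals geometric intersection for pairs of simple transverse loops on an oriented surface. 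A lift $\widetilde\psi$ of $\psi$ to $\Sigma'$ permutes the elevations of $x$ and sends them to the elevations of $\psi(x)$; since $\psi$ preserves the intersection form on $H_1(\Sigma',\IZ)$, the formula above forces $i(\psi(x),\psi(x))=i(x,x)$, and the same argument applied to pairs $(x,y)$ in a common cover gives $i(\psi(x),\psi(y))=i(x,y)$. Thus $\psi$ preserves the set of simple conjugacy classes on $\Sigma_g^b$ and all of their mutual geometric intersection numbers.

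The last step is to promote preservation of the simple-curve intersection pattern to preservation of the peripheral structure. The peripheral classes $[c_i]$ admit an intrinsic characterization inside the poset of simple classes on $\Sigma_g^b$: they are precisely the simple classes that can be disjoined from every other simple class and are themselves of minimal complexity among such (equivalently, they are represented by boundary parallel simple loops, detected via the behaviour of small neighbourhoods in a finite cover where the surface becomes high-genus). Since $\psi$ preserves simplicity and disjointness, it permutes this canonical subset of conjugacy classes, giving $\psi\in\Out^*(F_n)$, and then Dehn-Nielsen-Baer completes the proof.

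The main obstacle I anticipate is the precise statement and proof of the intersection formula used in the second paragraph: one has to verify that, once the elevations guaranteed by the Scott-type theorem are simple, the algebraic pairings of their images under the deck action faithfully record geometric self-intersection numbers on $\Sigma_g^b$, with no cancellation arising from opposite orientations. A secondary but still delicate point is the intrinsic, intersection-theoretic characterization of the peripheral conjugacy classes among all simple ones, which requires either a curve-complex type rigidity input or a direct topological argument exploiting the fact that peripheral simple loops are exactly those whose complement in $\Sigma_g^b$ contains a once-punctured disk.
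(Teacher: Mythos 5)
Your overall skeleton is the same as the paper's: reduce to a Dehn--Nielsen--Baer theorem for punctured surfaces by showing $\psi$ preserves the peripheral structure, and use the Scott-type elevation result of Lemma~\ref{scott} as the bridge between the homological hypothesis on covers and geometric information downstairs. What differs -- and where a genuine gap appears -- is the second paragraph.

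You propose to recover the geometric intersection number $i(x,y)$ \emph{exactly} from algebraic pairings $\widehat\iota(\gamma\cdot x',y')$ in a cover where elevations are simple, on the basis that ``algebraic equals geometric intersection for pairs of simple transverse loops on an oriented surface.'' This is false as stated: two simple closed curves on a surface of genus $\ge 2$ can meet transversally twice with opposite signs, giving $\widehat\iota=0$ while $i=2$, and nothing about passing to a single cover with simple elevations prevents precisely this cancellation between $\gamma\cdot x'$ and $y'$. You flag this yourself as the ``main obstacle,'' and the proposal as written does not resolve it. More importantly, the resolution is not a quantitative intersection formula at all; the theorem never needs the exact value of $i(x,y)$.

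The paper's route only requires detecting whether $i(x,y)$ is zero, and this is exactly what Lemma~\ref{boggi} delivers: if $x$ and $y$ are not disjoint, one does not just take a single cover where elevations are simple and read off a pairing -- one iterates further through Abelian covers (Case 2B of the proof of Lemma~\ref{scott}) until a pair of elevations has geometric intersection $1$, at which point the algebraic intersection \emph{must} be $\pm 1$ and cannot cancel. That is, the iteration is the device that defeats sign cancellation, but the conclusion extracted is purely qualitative. Since $\psi$ permutes elevation modules $V_x^{\Sigma'}\mapsto V_{\psi(x)}^{\Sigma'}$ and preserves the pairing, Lemma~\ref{boggi} then gives that $\psi$ preserves disjointness of closed curves, and Corollary~\ref{boggi2} (the case $x=y$) gives that $\psi$ preserves simplicity. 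Peripheral simple closed curves on $\Sigma_g^b$ are then characterized simply as those having trivial geometric intersection with every other closed curve -- no secondary ``minimal complexity'' clause is needed -- so $\psi$ preserves peripheral conjugacy classes and Dehn--Nielsen--Baer applies. In short: drop the quantitative intersection formula, use Lemma~\ref{boggi} and Corollary~\ref{boggi2} directly, and the proof closes up.
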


\section{Acknowledgements}

This article originated from my master's thesis at the University of Munich under the supervision of Sebastian Hensel. I am immensely grateful for his continuous support, tireless efforts and encouragement, as well as many helpful remarks.

\section{Asymptotic Linearity of $\Aut(F_n)$}

 Given a finite regular cover $X\to R_n$ with associated deck group $\Gamma\iso F_n/\pi_1(X)$, the conjugation action of $F_n$ on $\pi_1(X)$ induces an action on $\pi_1(X)^{ab}\iso H_1(X,\IZ)$ that descends to an action of $\Gamma$ and therefore endows $H_1(X,\IZ)$ with the structure of a $\mathbb{Z}[\Gamma]$-module. \\

We begin by examining the $\Gamma$-action on the rational homology $H_1(X,\IQ)$ of covers $X\to R_n$ and show that these actions are faithful and then extend our observation to integral homology. Our proof is a standard argument that is used in various proofs of the Chevalley-Weil formula \cite{CW} for graphs (sometimes referred to as Gaschütz Theorem). The proof that follows is essentially the same as in \cite{Koberda} and we state it here merely for the sake of clarity. Also compare \cite{GL},\cite{GLLM} for a detailed treatment of the present material. 
\begin{lemma}\label{Gaschuetz_Kor}
	If $X\to R_n$ is a finite regular cover with associated deck group $\Gamma$, then $\Gamma$ acts faithfully on $H_1(X,\mathbb{Z})$.
\end{lemma}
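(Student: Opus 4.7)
The plan is to reduce the statement to rational coefficients and then apply Maschke's theorem to a free resolution of $H_1(X,\IQ)$ as a $\IQ[\Gamma]$-module, from which a copy of the regular representation will fall out whenever $n\ge 2$.

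First I would note that, since $X$ is a graph, $H_1(X,\IZ)$ is a free abelian group that sits $\Gamma$-equivariantly inside $H_1(X,\IQ)=H_1(X,\IZ)\otimes_\IZ\IQ$. Hence if some $1\not=\gamma\in\Gamma$ acted trivially on the integral homology, it would also act trivially on the rational one. It therefore suffices to prove $\Gamma\curvearrowright H_1(X,\IQ)$ is faithful.

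Next I would exploit the cellular $\IQ[\Gamma]$-chain complex of $X$. Because $\Gamma$ permutes the vertices of $X$ simply transitively and acts freely on the edges with quotient $R_n$, we have $C_0(X,\IQ)\iso \IQ[\Gamma]$ and $C_1(X,\IQ)\iso \IQ[\Gamma]^n$ as $\IQ[\Gamma]$-modules. Since $\IQ[\Gamma]$ is semisimple by Maschke, the exact sequence
\[
0\to H_1(X,\IQ)\to C_1(X,\IQ)\xrightarrow{\partial} C_0(X,\IQ)\to H_0(X,\IQ)\to 0
\]
splits. Using $H_0(X,\IQ)\iso \IQ$ (the trivial representation, since $X$ is connected), one gets $\mathrm{Im}(\partial)\iso \IQ[\Gamma]\ominus \IQ$, and then
\[
H_1(X,\IQ)\iso \IQ[\Gamma]^{n-1}\oplus \IQ
\]
as $\IQ[\Gamma]$-modules. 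This is the Chevalley--Weil/Gaschütz decomposition that is referenced before the lemma.

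Finally, since $n\ge 2$ by the standing hypothesis, $H_1(X,\IQ)$ contains a copy of the regular representation $\IQ[\Gamma]$, which is faithful (the only element of $\Gamma$ fixing every basis vector $\gamma\in \IQ[\Gamma]$ is the identity). Hence $\Gamma$ acts faithfully on $H_1(X,\IQ)$, and therefore also on $H_1(X,\IZ)$ by the first paragraph. I do not anticipate a serious obstacle; the only point that deserves some care is making the identifications $C_i(X,\IQ)\iso \IQ[\Gamma]^{r_i}$ explicit so that the computation of $\mathrm{Im}(\partial)$ as a $\IQ[\Gamma]$-submodule (as opposed to a mere $\IQ$-subspace) is legitimate, and this is handled by working with the standard basis of lifts of the cells of $R_n$.
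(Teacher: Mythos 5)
Your proof is correct, and it takes a genuinely different route from the paper. The paper's argument is character-theoretic: it invokes the Lefschetz fixed-point theorem to conclude that each $1\neq g\in\Gamma$, acting freely on $X$, has vanishing Lefschetz number, whence the character $\chi$ of the $\IQ[\Gamma]$-module $H_1(X,\IQ)$ satisfies $\chi(g)=1$ for $g\neq 1$ while $\chi(1)=\operatorname{rank}\pi_1(X)=|\Gamma|(n-1)+1>1$; since a nontrivial element acting trivially would force $\chi(g)=\chi(1)$, faithfulness follows. You instead apply Maschke's theorem to the cellular $\IQ[\Gamma]$-chain complex, using the simply transitive action on vertices and the free action on edges to identify $C_0\cong\IQ[\Gamma]$, $C_1\cong\IQ[\Gamma]^n$, split off $H_0\cong\IQ$, and conclude $H_1(X,\IQ)\cong\IQ[\Gamma]^{n-1}\oplus\IQ$, then read off faithfulness from the regular summand. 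Both approaches encode the same geometric input (the free $\Gamma$-action on the cells of $X$) and both establish, in effect, the Chevalley--Weil/Gaschütz decomposition to which the paper alludes. The Lefschetz route is shorter and avoids any explicit module identifications; your route is slightly longer but makes the module decomposition completely explicit and makes transparent exactly where $n\ge 2$ is used. The passage from $\IQ$- to $\IZ$-coefficients is handled identically in both.
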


\begin{proof}
It is well known from covering space theory that if $1\not=g\in\Gamma$, then $g$ acts without fixed points on $X$ and its Lefschetz number $\tau(g)$ vanishes (cf. \cite[Theorem 2C.3]{Hatcher}). To be more specific, since $X$ is a graph, it has trivial homology in degree $n\ge 2$ and $1\not= g\in \Gamma$ acts trivially on $H_0(X,\IQ)$ since $X$ is connected. Let  $\chi(g)$ denote the character of the $\IQ[\Gamma]$-module $H_1(X,\IQ)$, then for the Lefschetz number of $g$ it follows \begin{align*}
		\tau(g) &= \mathrm{tr}\left(H_0(X,\IQ)\to H_0(X,\IQ)\right) - \mathrm{tr}\left(H_1(X,\IQ)\to H_1(X,\IQ)\right) \\ &= \mathrm{tr}(\mathrm{id}_{H_0(X,\IQ)}) - \chi(g) \\ &= 1 -\chi(g).
	\end{align*}
	Thus $1\not=g$ implies $\chi(g)=1$. Now let $g=1$. Then $$\chi(1) = \dim_{\IQ}H_1(X,\IQ)=\mathrm{rank}\ \pi_1(X)$$ as $\pi_1(X)$ is free. This shows that $\Gamma$ acts faithfully on $H_1(X,\IQ)$. In order to extend this observation to integral homology, note that the rational $\Gamma$-representation $H_1(X,\IQ)\iso H_1(X,\IZ)\otimes_{\IZ}\IQ$ naturally includes the integral $\Gamma$-representation $H_1(X,\IZ)$. More specifically, any $v\in H_1(X,\IQ)$ is a rational multiple of an integral vector $v_0\in H_1(X,\IZ)$. Accordingly, the faithful action of $\Gamma$ on $H_1(X,\IQ)$ restricts to a faithful action on $H_1(X,\IZ)$
	\end{proof}

The following result now relates the $\Aut(F_n)$-action on the deck group $\Gamma$ with the $\Aut(F_n)$-action on the first integral homology $H_1(X,\IZ)$.

\begin{lemma}\label{equi} Let $\psi\in \mathrm{Aut}(F_n)$ and let $X\to R_n$ be a finite regular cover with deck group $\Gamma$. Then $1=\psi\in \mathrm{Aut}(\Gamma)$ if and only if $\psi\in \mathrm{Aut}(F_n)$ acts $\Gamma$-equivariantly on $H_1(X,\IZ)$.
\end{lemma}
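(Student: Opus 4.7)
The plan is to express both sides of the equivalence in terms of a single equation relating a lift of $\psi$ to the deck group action, and then to use the faithfulness statement from Lemma \ref{Gaschuetz_Kor} to pass between them.

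First I would fix a representative $f \in \HE(R_n, *)$ of $\psi$ together with its unique lift $f' \in \HE(X, x_1)$ at the preferred basepoint. The key computation is how $f'$ interacts with the deck group: for every $\sigma \in \Gamma$, the map $f' \circ \sigma$ is again a lift of $f$, and tracing the basepoint identifies it with $\psi(\sigma) \circ f'$, where $\psi(\sigma) \in \Gamma$ is the image of $\sigma$ under the induced automorphism of $\Gamma = F_n / \pi_1(X)$. Passing to $H_1(X, \IZ)$ yields the identity
$$f'_* \circ \sigma_* \;=\; \psi(\sigma)_* \circ f'_*.$$

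For the forward implication, if $\psi$ acts trivially on $\Gamma$, then $\psi(\sigma) = \sigma$ for every $\sigma \in \Gamma$, and the identity above becomes $f'_* \circ \sigma_* = \sigma_* \circ f'_*$, which is precisely $\Gamma$-equivariance of $\psi$ on $H_1(X, \IZ)$. For the converse, assuming $\Gamma$-equivariance, the same identity forces $\sigma_* = \psi(\sigma)_*$ as operators on $H_1(X, \IZ)$ for every $\sigma \in \Gamma$. This is where Lemma \ref{Gaschuetz_Kor} delivers the punchline: the $\Gamma$-action on $H_1(X, \IZ)$ is faithful, so $\sigma_* = \psi(\sigma)_*$ implies $\sigma = \psi(\sigma)$, and $\psi$ is trivial on $\Gamma$.

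The main obstacle I expect is purely bookkeeping at the basepoint: one has to verify $f' \circ \sigma = \psi(\sigma) \circ f'$ by tracking how the identification of the deck group with $F_n / \pi_1(X)$ interacts with the fact that the lift $f'$ is rigidified by the choice of basepoint $x_1$, and that the induced $\psi \in \Aut(\Gamma)$ is by definition the descent of $f_* \in \Aut(F_n)$ to the quotient. Once these identifications are unpacked, the displayed identity is automatic and the rest of the argument is immediate from faithfulness.
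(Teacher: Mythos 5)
Your proof is correct and relies on the same key ingredient as the paper (Lemma \ref{Gaschuetz_Kor} for faithfulness), but it is packaged more geometrically. The paper works algebraically: for the forward direction it writes $\psi(w) = w\gamma_w$ with $\gamma_w \in \pi_1(X)$ and computes directly that $\psi(w\gamma w^{-1}) = w\gamma_w\psi(\gamma)\gamma_w^{-1}w^{-1} \equiv w\psi(\gamma)w^{-1} \bmod [\pi_1(X),\pi_1(X)]$; for the converse it reverses this computation and appeals to faithfulness. You instead establish the single covering-space identity $f'_* \circ \sigma_* = \psi(\sigma)_* \circ f'_*$ and read off both directions from it, using that $f'_*$ is invertible (since $f'$ is a homotopy equivalence — a point worth stating explicitly) to cancel $f'_*$ before invoking faithfulness in the converse. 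Your organization is slightly cleaner: the forward implication becomes a pure substitution rather than a conjugation calculation. But both arguments are the same equation seen in two guises — the bookkeeping step you flag, verifying $f' \circ \sigma = \psi(\sigma) \circ f'$ by uniqueness of lifts at $x_1$, is exactly the topological translation of the paper's algebraic identity $\psi(w\gamma w^{-1}) = \psi(w)\psi(\gamma)\psi(w)^{-1}$.
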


\begin{proof}
Assume that $\psi\in \mathrm{Aut}(F_n)$ acts trivially on  $\mathrm{Aut}(\Gamma)$, i.e. $\psi(w) = w\gamma_w$ for some $\gamma_w\in \pi_1(X)$. Since $F_n$ acts on $\pi_1(X)$ by conjugation, it follows that for $\omega\in F_n$ and $\gamma\in \pi_1(X)$ we have \begin{align}\psi(w\cdot \gamma) = \psi(w\gamma w^{-1}) = \psi(w)\psi(\gamma)\psi(w)^{-1} = w\gamma_w\psi(\gamma)\gamma_w^{-1}w^{-1}.\label{conj} \end{align}
The conjugation action restricts to $\pi_1(X)$ and is trivial in the free $\mathbb{Z}$-module $H_1(X,\IZ)\iso \pi_1(X)/[\pi_1(X),\pi_1(X)]$, hence $\gamma_w\psi(\gamma)\gamma_w^{-1}\ \equiv \psi(\gamma) \bmod [\pi_1(X),\pi_1(X)]$ which implies $w\gamma_w\psi(\gamma)\gamma_w^{-1}w^{-1}\equiv w\psi(\gamma)w^{-1} \bmod [\pi_1(X),\pi_1(X)].$
Thus by (\ref{conj}) we have $$\psi(w\gamma w^{-1}) \equiv  w\psi(\gamma)w^{-1}\bmod [\pi_1(X),\pi_1(X)],$$
which shows that $\psi$ acts $\Gamma$-equivariantly on $H_1(X,\IZ)$. Conversely, assume that the $\mathrm{Aut}(F_n)$ action on $H_1(X,\IZ)$ is $\Gamma$-equivariant, i.e. for all $w\in F_n$ we have $$\psi(w\cdot \gamma) \equiv w\cdot \psi(\gamma) \bmod [\pi_1(X),\pi_1(X)]$$ which is to say that $\psi(w)\psi(\gamma)\psi(w)^{-1}\equiv w\psi(\gamma)w^{-1}\bmod [\pi_1(X),\pi_1(X)].$ Thus $\psi(w)$ and $w$ act the same on $H_1(X,\IZ)$, that is, $\psi(w)w^{-1}$ acts like the identity on $H_1(X,\IZ)$. Lemma \ref{Gaschuetz_Kor} thus implies $\psi(w)w^{-1} \equiv 1 \bmod \pi_1(X)$. \end{proof}

Using Lemma \ref{equi} we are now ready to prove Proposition \ref{asymptotic_aut} from the introduction.	 
\begin{proof}[Proof of Proposition \ref{asymptotic_aut}]
	Assume $\psi$ acts trivially on $H_1(X,\mathbb{Z})$, i.e. $\psi(d) = d$ for any $d\in H_1(X,\mathbb{Z})$. For $\gamma\in \Gamma$ we thus have $\psi(\gamma\cdot d) =  \gamma\cdot\psi(d).$ The proposition now follows from Lemma \ref{equi}.
	 \end{proof}
	 
\section{Detecting Free Group Automorphisms}\label{detectepi}

In this section we shall prove Theorem \ref{main_theorem}. We want to extend our examination of the virtual homology representations of $\Aut(F_n)$ to $\mathrm{End}(F_n)$. To this end, we show that $F_n$-automorphisms are detected by the $\End(F_n)$-action on the homology of characteristic covers $X\to R_n$ in the sense that if an element $\phi\in \End(F_n)$ acts by inducing an epimorphism on the homology of all finite (fully characteristic) covers of $R_n$, then $\phi\in \Aut(F_n)$.  In view of the discussion that follows, recall that finitely generated subgroups of $F_n$ are realized by immersions of finite graphs into $R_n$. The reader who is unfamiliar with this correspondence should consult \cite{Stallings}.\\

Theorem \ref{main_theorem} can be proved in several ways. One could employ a Theorem of Marshall Hall \cite[Prop 3.10]{Lyndon} and show that the (finitely generated) image of a non-surjective $F_n$-endomorphism is a nontrivial proper free factor of $F_n$ and hence a direct summand in homology which then shows the claim. The following proof follows a similar approach. Recall that for a subgroup $G\le F_n$ that corresponds to a cover $X\to R_n$, we have a natural identification in homology $H_1(G,\IZ)\iso H_1(X,\IZ)$. Also note that if $e$ is an edge of a graph $X$, then the \textit{algebraic intersection} of homology classes with $e$ is a well defined linear functional $\widehat\iota(e,\cdot)\colon H_1(X,\IZ)\to\IZ$ that defines a cohomology class in $H^{1}(X,\IZ)$.

\begin{proof}[Proof of Theorem \ref{main_theorem}]
	Let $\phi\in \mathrm{End}(F_n)$ and assume that $\phi$ is not surjective, i.e. $\phi(F_n)=:E$ is a finitely generated proper subgroup of $F_n$ that is realized by an immersion of (finite) graphs $f\colon \mathcal G\to R_n$ satisfying $f_*\pi_1(\mathcal G)\iso E$. By \cite[Theorem 6.1]{Stallings} we can extend $\mathcal G$ to a finite cover $\widehat f\colon\widehat{\mathcal{G}}\to R_n$ such that $\widehat f_*\pi_1(\widehat{\mathcal{G}})\iso G$ by adding finitely many suitable edges without adding any new vertices, so that $E\subseteq G$.  \\
	
Consider the cellular chain complex of $\widehat{\mathcal{G}}$ and let $C_1^{\mathrm{cell}}(\widehat{\mathcal{G}})$ denote the free $\mathbb{Z}$-module that is generated by the $1$-cells of $\widehat{\mathcal{G}}$. Fix an edge $\widehat e\in C_1^{\operatorname{cell}}(\widehat{\mathcal{G}})$ that is missing from $\mathcal{G}$, i.e. $\widehat e$ is one of the edges that extends the immersion $\mathcal{G}\to R_n$ to the cover $\widehat{\mathcal{G}}\to R_n$. Now consider the cohomology class $[\rho]\in H^1(\widehat{\mathcal{G}})$ that is represented by the integer valued homomorphism $\rho\in \mathrm{Hom}(C_1^{\operatorname{cell}}(\widehat{\mathcal{G}}),\mathbb{Z})$ assigning each edge $e$ of $\widehat{\mathcal{G}}$ the algebraic intersection number $\widehat\iota(e,\widehat p)$ where $\widehat p$ denotes an interior point of the fixed edge $\widehat e$, i.e. we have a map
\begin{align*}\rho\colon C_1^{\operatorname{cell}}(\widehat{\mathcal{G}})\to \mathbb{Z},\ e\mapsto \widehat\iota(e,\widehat p)
\end{align*}

such that $$\rho(e)=\begin{cases} 0, & e\not=\widehat e \\ 1, & e = \widehat e.
	 	
	 \end{cases}$$
It follows that $\rho$ represents a nonvanishing cohomology class $[\rho]\in H^1(\widehat{\mathcal{G}},\mathbb{Z})$ that vanishes on $H_1(\mathcal{G},\mathbb{Z})$ and thus $H_1(\mathcal{G},\mathbb{Z})$ is a proper submodule of $H_1(\widehat{\mathcal{G}},\mathbb{Z})$. Since both $\mathcal{G}$ and $\widehat{\mathcal{G}}$ are Eilenberg-MacLane spaces,  it follows that $H_1(E,\mathbb{Z})$ is a proper submodule of $H_1(G,\mathbb{Z})$. Consequently, we have that $\phi_*H_1(G,\mathbb{Z})\subseteq H_1(E,\mathbb{Z})$ which implies that image of the inclusion induced homomorphism $(\phi|_G)_*\colon H_1(G,\mathbb{Z})\to H_1(G,\mathbb{Z})$ is contained in a proper free submodule of $H_1(G,\mathbb{Z})$.
	 \end{proof}
	 An immediate consequence of Theorem \ref{main_theorem} is given by the following easy observation.
	 \begin{corollary}
	Let $f\colon R_n\to R_n$ be some map and let $\mathcal K$ be the class of finite characteristic based covers of $R_n$. If $\phi\in \End(F_n)$ represents $f$ and acts on $H_1(X,\IZ)$ by epimorphisms for every $X\in \mathcal K$, then $f$ is a homotopy equivalence of $R_n$.
\end{corollary}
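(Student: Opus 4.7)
The plan is to reduce this statement directly to Theorem \ref{main_theorem} together with two standard facts about free groups and Eilenberg–MacLane spaces. The first thing to observe is that every fully characteristic subgroup of $F_n$ is in particular characteristic, so the class of fully characteristic finite based covers is contained in the class $\mathcal K$ of characteristic ones considered here. Consequently, the hypothesis that $\phi$ acts by epimorphisms on $H_1(X,\IZ)$ for every characteristic cover immediately gives the same conclusion for every fully characteristic cover (and on these covers the $\End(F_n)$-action is well defined, as $\pi_1(X)$ is invariant under $\phi$).

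With that observation in hand, Theorem \ref{main_theorem} applies verbatim and forces $\phi\in \End(F_n)$ to be surjective. Since finitely generated free groups are Hopfian, any surjective endomorphism of $F_n$ is in fact an automorphism, so $\phi\in \Aut(F_n)$.

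Finally, to promote this algebraic statement to a topological one about the map $f$ itself, I invoke the fact that $R_n$ is a $K(F_n,1)$: as an aspherical CW-complex with free fundamental group, its higher homotopy groups vanish. By Whitehead's theorem (or directly by the classification of maps between $K(\pi,1)$ spaces), a self-map of $R_n$ is a homotopy equivalence if and only if the induced map on $\pi_1$ is an isomorphism. Choosing, if necessary, a path from $f(*)$ to $*$ to make $f$ basepoint-preserving up to free homotopy so that $\phi = f_*$ is well defined up to conjugation (which does not affect being an automorphism), the automorphism $\phi$ certifies that $f$ is a homotopy equivalence.

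There is no real obstacle here; the corollary is essentially a bookkeeping consequence of Theorem \ref{main_theorem}. The only point that deserves a moment of care is the compatibility between the hypothesis (characteristic covers) and the setting in which the $\End(F_n)$-action was defined in the preceding discussion (fully characteristic covers), which is handled by the inclusion of classes noted above.
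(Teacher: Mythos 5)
Your proof is correct and follows exactly the route the paper intends (the paper itself omits a proof, calling the corollary an immediate consequence of Theorem \ref{main_theorem}): pass from characteristic to fully characteristic covers so that Theorem \ref{main_theorem} applies, invoke Hopficity of $F_n$ to upgrade surjectivity to bijectivity, and then use that $R_n$ is a $K(F_n,1)$ together with Whitehead's theorem to promote the $\pi_1$-isomorphism to a homotopy equivalence. You also rightly flag the only subtle point — that the hypothesis is phrased for characteristic covers while the theorem is stated for fully characteristic ones — and resolve it correctly by the containment of classes.
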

	 
We are now ready to prove Corollary \ref{endo_faithful} from the introduction that establishes the asymptotic linearity of $\End(F_n)$.
\begin{proof}[Proof of Corollary \ref{endo_faithful}]
	Assume $\phi\in \mathrm{End}(F_n)$ acts trivially on $H_1(X,\mathbb{Z})$ for all $X\in \mathcal K$. Proposition \ref{main_theorem} then implies that $\phi$ is surjective, hence $\phi$ is an $F_n$-automorphism that acts trivially (and thus $\Gamma$-equivariantly) on $H_1(X,\mathbb{Z})$ for every $X\in \mathcal K$. Lemma \ref{equi} then establishes that $\phi$ acts trivially on the Deck group $\Gamma$ for every cover $X\in \mathcal K$. The assertion now follows from Proposition \ref{asymptotic_aut}.\end{proof}
	
\begin{corollary}\label{corollary_endo} Let $\{X_i\}_{i\in I}$ be an exhausting sequence of finite fully characteristic based covers of $R_n$ and let $1\not=\phi\in \mathrm{End}(F_n)$. Then $\phi$ acts nontrivially on $H_1(X_i,\mathbb{Z})$ for some $i\in I$.
\end{corollary}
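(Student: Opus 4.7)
The plan is to reduce the statement to Corollary \ref{endo_faithful} by exploiting the exhaustion hypothesis. Recall that an exhausting sequence $\{X_i\}_{i\in I}$ of finite fully characteristic based covers means that the associated subgroups $\pi_1(X_i)\le F_n$ satisfy $\bigcap_{i\in I}\pi_1(X_i)=1$; equivalently, every nontrivial element of $F_n$ projects nontrivially to some characteristic quotient $\Gamma_i = F_n/\pi_1(X_i)$.

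The first step is to translate the hypothesis $1\not=\phi\in \End(F_n)$ into a witness element. Since $\phi$ is not the identity endomorphism, I would choose some $w\in F_n$ with $\phi(w)\not= w$, equivalently $w^{-1}\phi(w)\not= 1\in F_n$. Applying the exhaustion property to this element, there exists some index $i\in I$ with $w^{-1}\phi(w)\notin \pi_1(X_i)$, so that in the quotient $\Gamma_i$ the induced endomorphism satisfies $\phi(\overline w)\not= \overline w$, where $\overline w$ denotes the image of $w$ in $\Gamma_i$. Here it is essential that each $\pi_1(X_i)$ is fully characteristic, so that $\phi\in \End(F_n)$ descends to a well-defined endomorphism $\phi\in \End(\Gamma_i)$, which is exactly what is needed to invoke the framework of Corollary \ref{endo_faithful}.

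The second step is a direct appeal to Corollary \ref{endo_faithful}: since $1\not= \phi\in \End(\Gamma_i)$ on the fully characteristic quotient $\Gamma_i$, that corollary guarantees that $\phi$ acts nontrivially on $H_1(X_i,\IZ)$, which is precisely the conclusion.

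There is no real obstacle here beyond ensuring the bookkeeping is correct; the content is an unpacking of what "exhausting" means combined with the already established asymptotic faithfulness. The only mildly subtle point is the distinction between $\phi$ being nontrivial as an $F_n$-endomorphism and being nontrivial as a $\Gamma_i$-endomorphism, which is bridged precisely by choosing $i$ large enough that the difference $w^{-1}\phi(w)$ survives in the quotient.
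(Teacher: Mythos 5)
Your proof is correct and follows the same route as the paper: use exhaustion to find a quotient $\Gamma_i$ on which $\phi$ acts nontrivially, then invoke Corollary \ref{endo_faithful}. You make the witness element $w^{-1}\phi(w)$ explicit where the paper is terse, and you cite the correct supporting result (the paper's proof writes ``Theorem \ref{main_theorem}'' where it evidently means Corollary \ref{endo_faithful}).
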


\begin{proof}
	Let $\phi\in \End(F_n)$. Since $\{X_i\}_{i\in I}$ is exhausting, we always find some quotient $\Gamma_i$ such that $\phi(\gamma)\not\equiv \gamma\bmod \pi_1(X_i)$. That is, $\phi$ acts nontrivially on $\Gamma_i$. By Theorem \ref{main_theorem} the claim follows.
\end{proof}

In the context of the various corollaries of Theorem \ref{main_theorem}, we want to construct an exhausting sequence of fully characteristic subgroups of $F_n$ explicitly. One way to do so is by taking iterated \textit{$\bmod\ q$} homology covers of $R_n$.

\begin{mydef}
The \emph{$\operatorname{mod}$ $q$ homology cover} of $R_n$ is the cover that is defined by the epimorphism $$\Psi\colon \pi_1(R_n)\to H_1(R_n,\mathbb{Z}/q\mathbb{Z})$$ that factors through the abelianization projection of $\pi_1(R_n)$ followed by the $\operatorname{mod}$ $q$ reduction homomorphism.\end{mydef}

From the definition it immediately follows that $\operatorname{mod}$ $q$ homology covers of $R_n$ are finite Abelian covers with deck group $\Gamma=H_1(R_n,\mathbb{Z}/q\mathbb{Z})\iso (\mathbb{Z}/q\mathbb{Z})^n$.

\begin{figure}[H]
\centering
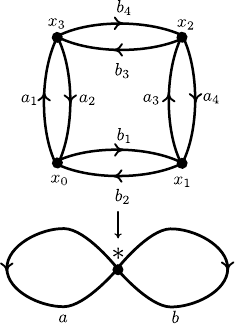
\caption{The mod $2$ homology cover $p\colon X\to R_2$.}
\label{mod_figure}
\end{figure}

\begin{bsp} Let $R_2$ be the wedge of $2$ circles $a,b$ with wedge point $*$ such that $\pi_1(R_2,*)$ is identified with $F_2$ (cf. Figure \ref{mod_figure}). The mod $2$ homology cover $p\colon (X,x_0)\to (R_2,*)$ is the cover defined by the epimorphism $F_2 \to \IZ/2\IZ\times\IZ/2\IZ$ and $X$ is therefore a $4$-fold Abelian cover with deck group the Klein four group $\mathbb{Z}/2\mathbb{Z}\times \mathbb{Z}/2\mathbb{Z}$.
\end{bsp}

\begin{lemma}\label{modq}
	The subgroup $G\le F_n$ that corresponds to a mod $q$ homology cover of $R_n$ is fully characteristic for any $q\in \IN, n\ge 2$.
\end{lemma}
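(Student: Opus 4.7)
The plan is to describe $G$ explicitly and then reduce the full characteristicness of $G$ in $F_n$ to the (obvious) fact that $q\IZ^n$ is fully characteristic in $\IZ^n$. By the definition of the mod $q$ homology cover, the defining epimorphism $\Psi\colon F_n\to H_1(R_n,\IZ/q\IZ)$ factors through the abelianization as
\[
F_n \xrightarrow{\ \pi\ } F_n^{\mathrm{ab}} \iso \IZ^n \xrightarrow{\ \bmod q\ } (\IZ/q\IZ)^n,
\]
so $G = \ker\Psi$ is precisely the preimage $\pi^{-1}(q\IZ^n)$, i.e.\ the subgroup of $F_n$ consisting of words whose abelianization lies in $q\IZ^n$.

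Next, I would use the universal property of abelianization: any $\phi\in \End(F_n)$ descends to a well-defined group endomorphism $\phi^{\mathrm{ab}}\colon \IZ^n\to \IZ^n$ fitting into the commutative square with $\pi\circ\phi = \phi^{\mathrm{ab}}\circ\pi$. Since $\IZ^n$ is abelian, any such group endomorphism is automatically $\IZ$-linear, and so it carries $q\IZ^n$ into $q\IZ^n$ — this is the trivial observation that $q\IZ^n$ is a fully characteristic (in fact verbal) subgroup of $\IZ^n$.

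Putting these together, for any $w\in G$ we have $\pi(w)\in q\IZ^n$, hence
\[
\pi(\phi(w)) = \phi^{\mathrm{ab}}(\pi(w)) \in \phi^{\mathrm{ab}}(q\IZ^n) \subseteq q\IZ^n,
\]
which means $\phi(w)\in G$. Thus $\phi(G)\subseteq G$ for every $\phi\in \End(F_n)$, proving that $G$ is fully characteristic. I do not anticipate any real obstacle here; the content of the lemma is the single elementary observation that $q\IZ^n$ is stable under every $\IZ$-module endomorphism of $\IZ^n$, and everything else is formal naturality of abelianization.
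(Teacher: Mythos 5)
Your proof is correct, and it is morally the same one-line observation as the paper's, just packaged via functoriality rather than via generators. The paper identifies $G=\ker\Psi$ explicitly as the verbal subgroup $\langle [x,y], z^q \mid x,y,z\in F_n\rangle$ (the commutator subgroup together with all $q$th powers) and concludes immediately because verbal subgroups are preserved by every endomorphism. You instead write $G=\pi^{-1}(q\IZ^n)$ and invoke the naturality square $\pi\circ\phi = \phi^{\mathrm{ab}}\circ\pi$ together with the elementary fact that $q\IZ^n$ is stable under every $\IZ$-linear endomorphism of $\IZ^n$. These are two phrasings of the same fact — indeed you note yourself that $q\IZ^n$ is verbal — so the approaches buy essentially the same thing; yours avoids having to identify generators of the kernel explicitly, while the paper's is marginally more self-contained since it does not even need to mention abelianization as a functor. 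Either argument is complete and correct.
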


\begin{proof}
	We verify that $\ker\Psi$ is invariant under any endomorphism of $F_n$. By definition we have $$\ker\Psi = \left\langle [x,y],z^q\mid x,y,z\in F_n\right\rangle.$$  Since commutators and powers of elements in $F_n$ are preserved by endomorphisms, the claim follows.\end{proof}

\begin{mydef}Let $$G_1:=\ker\left(\pi_1(R_n,*)\xrightarrow{\Psi} H_1(R_n,\mathbb{Z}/q\mathbb{Z})\right)$$ be the mod $q$ homology cover $X_1\to R_n$ such that $G_1\iso \pi_1(X_1,x_1)$ where $x_1$ denotes the preferred lift of the basepoint $*$. Similarly, let $$G_2:=\ker\Big(\pi_1(X_1,x_1)\to H_1(X_1,\mathbb{Z}/q\mathbb{Z})\Big)$$ be the mod $q$ homology cover of $X_2\to X_1$ with preferred basepoint $x_2\in X_2$. Setting $X_0:=R_n$ we therefore define inductively
 $$G_{k+1}:=\ker\Big(\pi_1(X_{k},x_{k})\to H_1\left(X_{k},\mathbb{Z}/q\mathbb{Z}\right)\Big),\ k= 0,1,2...$$ where $x_k$ is the preferred lift of $*$ to the iterated cover $X_{k}\to R_n$ and the characteristic subgroup $G_{k+1}$ is identified with $\pi_1(X_{k+1},x_{k+1})$ for all $k\in \IN$. This defines an infinite sequence of based covers $$\cdots\to  X_k \to X_{k-1}\to \cdots \to X_1\to R_n$$ that is called the \emph{tower of mod $q$ homology covers} of $R_n$.
\end{mydef}

We also recall the following standard result from combinatorial group theory.
\begin{lemma}[\!\!{\cite[Corollary 2.12]{Magnus}}]\label{exhausting_lemma} Let $$F_n\supset G_1\supset G_2\supset \cdots G_n\supset \cdots$$ be a descending sequence of distinct subgroups such that $G_{i+1}$ is characteristic in $G_i$, then the sequence $\{G_i\}_{i\in \IN}$ exhausts $F_n$.
\end{lemma}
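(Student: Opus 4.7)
My plan is to reduce the assertion to a statement about characteristic subgroups of $F_n$ alone, and then invoke the residual finiteness of $F_n$. I read the chain with the convention $G_0 := F_n$, so that $G_1$ is characteristic in $G_0$. The first step is to prove by induction that each $G_i$ is characteristic in $F_n$, not merely in $G_{i-1}$: if $G_i$ is characteristic in $F_n$, then any $\phi \in \Aut(F_n)$ restricts to an automorphism of $G_i$, and since $G_{i+1}$ is characteristic in $G_i$ this restriction preserves $G_{i+1}$; hence $G_{i+1}$ is $\Aut(F_n)$-invariant. The same cascading shows that $G_j$ is characteristic in $G_i$ whenever $j\ge i$, and therefore $\bigcap_{j\ge i}G_j$ is characteristic in each $G_i$.

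Now suppose for contradiction that there is some $1\neq w\in\bigcap_i G_i$. By the residual finiteness of $F_n$ (together with the elementary fact that the intersection of the finitely many $\Aut(F_n)$-conjugates of a finite-index normal subgroup is a finite-index characteristic subgroup), there exists a finite-index characteristic subgroup $K\triangleleft F_n$ with $w\notin K$. The goal is to show that some $G_i$ is contained in $K$, contradicting $w\in G_i$. Passing to the finite quotient $F_n/K$, the images $\bar G_i = G_iK/K$ form a descending chain of subgroups of the finite group $F_n/K$, so they stabilize at some $\bar G_\infty\le F_n/K$. If $\bar G_\infty = 1$ we are done, since then $G_i\subseteq K$ for all large $i$.

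The main obstacle is ruling out the scenario $\bar G_\infty\neq 1$. In this scenario, the products $G_iK$ stabilize to a proper characteristic subgroup $\tilde G\lneq F_n$ strictly containing $K$, and the strict descent $G_i\supsetneq G_{i+1}$ is forced to occur entirely within $\tilde G$, with $[G_i:G_i\cap K]$ becoming constant. To close this, I expect to iterate the argument with $K$ replaced by a strictly smaller finite-index characteristic subgroup $K'\triangleleft F_n$ drawn from a cofinal family, for instance a term $\gamma_k(F_n)\cdot F_n^q$ of the mod~$q$ lower central filtration; since $F_n$ is residually a finite $q$-group and residually nilpotent, such $K'$ can be made arbitrarily deep. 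The cascading property then lets me view $\bigcap_i G_i$ as a characteristic subgroup of every $G_i$, which combined with the strict descent forces this intersection to sit inside every term of the canonical lower central / congruence filtration of the $G_i$'s. Invoking Magnus' classical theorem that $\bigcap_k\gamma_k(F_n)=1$ then yields $\bigcap_i G_i=1$, contradicting $w\neq 1$. The delicate part, and where I expect to spend the most effort, is showing that the stronger hypothesis "$G_{i+1}$ characteristic in $G_i$" — as opposed to merely "characteristic in $F_n$" — is precisely what prevents the chain from stabilizing above a nontrivial characteristic subgroup.
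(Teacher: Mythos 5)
Your plan has a genuine gap at exactly the spot you flag yourself. After the cascading step, the argument only ever uses that each $G_i$ is characteristic in $F_n$, and that weakened hypothesis is genuinely insufficient: with $G_i := [F_n,F_n]\cdot F_n^{q^i}$ you get a strictly descending chain of (fully) characteristic subgroups of $F_n$ whose intersection is $[F_n,F_n]\neq 1$. So the scenario $\bar G_\infty\neq 1$ that you want to rule out cannot be excluded by residual finiteness of $F_n$ alone; the proposed ``iterate with a smaller $K'$'' is an appeal rather than an argument, the assertion that strict descent forces $\bigcap_i G_i$ into every term of the lower central filtration of the $G_i$'s is unjustified, and Magnus' theorem $\bigcap_k\gamma_k(F_n)=1$ concerns one specific filtration, not an arbitrary chain of characteristic subgroups. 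In short, replacing ``$G_{i+1}$ characteristic in $G_i$'' by ``each $G_i$ characteristic in $F_n$'' loses exactly the information the lemma depends on.

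The idea you are missing is to use the consecutive-term hypothesis directly through primitive elements, which is what the paper does. Each $G_i$ is free by Nielsen--Schreier, and $\Aut(G_i)$ acts transitively on the primitive elements of the free group $G_i$. Hence a \emph{proper} characteristic subgroup of $G_i$ cannot contain any primitive element of $G_i$: if it contained one, by transitivity it would contain all of them and therefore equal $G_i$. Since $G_{i+1}\subsetneq G_i$ is characteristic in $G_i$ by hypothesis, no $G_{i+1}$ contains a primitive of $G_i$. The paper then invokes the cited result of Magnus--Karrass--Solitar, which says precisely that a descending chain of subgroups of a free group in which no $G_{i+1}$ contains a primitive element of $G_i$ must intersect trivially. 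That is the shape of argument to aim for here: it exploits the hypothesis on consecutive terms head-on rather than laundering it into a statement about characteristic subgroups of the ambient $F_n$.
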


\begin{proof} As subgroups of $F_n$, each $G_i$ is free. Recall that $\Aut(G_i)$ acts transitively on primitive elements of $G_i$. Since $G_{i+1}$ is characteristic in $G_i$, it is invariant under the action of any element in $\Aut(G_i)$. Thus if $G_{i+1}$ were to contain a primitive element of $G_i$, then $G_i=G_{i+1}$ which contradicts the assumption. The claim is now an application of \cite[Theorem 2.12]{Magnus} that asserts that if no $G_{i+1}$ contains any primitive element of $G_i$, the descending sequence $\{G_i\}_{i\in \IN}$ exhausts $F_n$.
\end{proof}

Note that all mod $q$ homology covers correspond to finitely generated free groups as they are finite covers of $R_n$. Lemma \ref{modq} establishes that the subgroup $G_{i+1}$ that corresponds to the mod $q$ homology cover $X_{i+1}\to X_i$ is fully characteristic in $G_i$ and of finite index since each $X_i$ is a finite cover of the finite graph $R_n$. Applying Corollary \ref{corollary_endo} we therefore obtain:

\begin{corollary}
	Let $\{X_i\}_{i\in \IN}$ be the tower of mod $q$ homology covers of $R_n$. Then $1\not=\phi\in \mathrm{End}(F_n)$ acts nontrivially on $H_1(X_i,\mathbb{Z})$ for some $i\in \IN$.
\end{corollary}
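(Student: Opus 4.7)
The plan is to reduce the corollary to a direct application of Corollary \ref{corollary_endo}. To invoke it, I need to check that $\{X_i\}_{i\in \IN}$ is an exhausting sequence of finite fully characteristic based covers of $R_n$; finiteness is automatic at every level, so only full characteristicity of each $G_i := \pi_1(X_i,x_i)$ as a subgroup of $F_n$, and the exhausting property, require verification.

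To establish full characteristicity, I would argue by induction on $i$. The base case $G_1 \le F_n$ is exactly Lemma \ref{modq}. The proof of that lemma uses only that commutators and $q$-th powers are preserved by endomorphisms of a free group, so the very same argument applies to the mod $q$ homology cover $X_{i+1} \to X_i$ and shows that $G_{i+1}$ is fully characteristic in $G_i$. Given $\phi \in \End(F_n)$, the inductive hypothesis forces $\phi(G_i) \subseteq G_i$, so $\phi$ restricts to an endomorphism of $G_i$; then $\phi(G_{i+1}) \subseteq G_{i+1}$ follows from full characteristicity of $G_{i+1}$ in $G_i$, completing the induction.

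For the exhausting property I would invoke Lemma \ref{exhausting_lemma}, which requires the sequence to be strictly descending and each $G_{i+1}$ to be characteristic in $G_i$. The latter is immediate from the previous paragraph, and the former follows because the Nielsen--Schreier formula forces $\rank G_i \ge 2$ (using $n \ge 2$ and the fact that $G_i$ has finite index in $F_n$), so for $q \ge 2$ the finite deck group $H_1(X_i,\IZ/q\IZ) \iso (\IZ/q\IZ)^{\rank G_i}$ is nontrivial and $G_{i+1}$ is a proper subgroup of $G_i$. With both hypotheses of Corollary \ref{corollary_endo} verified, it directly yields the conclusion. No step presents a serious obstacle; the substantive content lives in Lemma \ref{modq} and Corollary \ref{corollary_endo}, and the remaining work amounts to checking that the tower of mod $q$ homology covers satisfies their hypotheses.
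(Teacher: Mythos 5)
Your proposal is correct and follows essentially the same route as the paper: invoke Lemma~\ref{modq} for full characteristicity at each stage, Lemma~\ref{exhausting_lemma} for exhaustion, and then apply Corollary~\ref{corollary_endo}. You simply spell out two steps the paper leaves implicit — the inductive transitivity argument showing each $G_i$ is fully characteristic in $F_n$ (not merely in $G_{i-1}$), and the Nielsen--Schreier rank estimate guaranteeing strict descent so that Lemma~\ref{exhausting_lemma} applies (which also correctly flags that $q\ge 2$ is needed).
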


\section{The $\mathrm{End}(F_n)$-action on the Lower Central Series}\label{sec4}

In view of the implication of Proposition \ref{main_theorem} we now set out to draw a comparison between the action of $\mathrm{End}(F_n)$ on fully characteristic quotients of $F_n$ and the virtual homology representations. Recall that we raised the following question in the introduction:

\begin{manualquestion}{\ref{q1}}
 Given a sequence $\{\Gamma_i\}_{i\in I}$ of fully characteristic quotients that exhaust $F_n$ and on which $\phi\in \mathrm{End}(F_n)$ acts by epimorphisms for every $\Gamma\in \{\Gamma_i\}$. Is this sufficient to conclude that $\phi$ is an epimorphism?
\end{manualquestion}

We tackle Question \ref{q1} by studying the action of $\mathrm{End}(F_n)$ on the quotients that are derived from the lower central series of $F_n$. In particular, we demonstrate that there exist endomorphisms of $F_n$ that fail to be epimorphisms of $F_n$ despite inducing epimorphisms on all canonical nilpotent quotients $F_n/F_n^{(k)}$ were $F_n^{(k)}$ denotes the $k$th term of the lower central series. In this context, we reflect on a classical result of Magnus \cite{Magnus35} that establishes that the lower central series of $F_n$ is residually nilpotent and thus produces an exhausting sequence of fully characteristic quotients.\\

Observe that for any group $G$ with fully characteristic subgroup $K\le G$ and quotient $\Gamma=G/K$, the restriction of $\phi$ to $K$ induces an endomorphism $\phi|_K\in \mathrm{End}(K)$ which naturally induces a well defined map $\mathrm{End}(G)\to \mathrm{End}(\Gamma)$, that assigns each $\phi\in \mathrm{End}(G)$ to an endomorphism $\overline\phi\in \mathrm{End}(\Gamma)$. The following statement is trivial but we include it here for the sake of clarity in the arguments that follow.

\begin{lemma}\label{2out3}
	If $\phi|_K$ and $\overline\phi$ are both surjective, then $\phi$ is surjective.
\end{lemma}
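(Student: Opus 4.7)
The plan is the standard diagram-chase for surjectivity in a short exact sequence, applied to the commutative square
\[
\begin{tikzcd}
K \arrow[r, hook] \arrow[d, "\phi|_K"'] & G \arrow[r, two heads] \arrow[d, "\phi"] & \Gamma \arrow[d, "\overline\phi"] \\
K \arrow[r, hook] & G \arrow[r, two heads] & \Gamma
\end{tikzcd}
\]
which makes sense because $K$ is fully characteristic, so $\phi(K)\subseteq K$ and $\phi$ descends to $\overline\phi\in\End(\Gamma)$.

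To verify surjectivity of $\phi$, I would pick an arbitrary $g\in G$ and construct a preimage in two steps. First, using surjectivity of $\overline\phi$, choose $h\in G$ with $\overline\phi(\overline h)=\overline g$; this means $\phi(h)$ and $g$ agree modulo $K$, so $k:=\phi(h)^{-1}g$ lies in $K$. Second, using surjectivity of $\phi|_K$, choose $k'\in K$ with $\phi(k')=k$. Then $\phi(hk')=\phi(h)\phi(k')=\phi(h)k=g$, which exhibits $g$ as an element of $\phi(G)$. Since $g$ was arbitrary, $\phi$ is surjective.

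There is no real obstacle here; the only thing to be a little careful about is that $\phi|_K$ is genuinely an endomorphism of $K$ (rather than just a map $K\to G$), which is exactly the reason the hypothesis \emph{fully characteristic} was built into the setup of the section. Everything else is a one-line lift along each row of the square.
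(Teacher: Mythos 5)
Your proof is correct. The paper itself gives no proof of Lemma \ref{2out3}, dismissing the statement as trivial; the two-step lift along the short exact sequence $1\to K\to G\to\Gamma\to 1$ that you spell out is exactly the standard argument the paper leaves implicit, and your remark about needing $K$ to be fully characteristic so that $\phi|_K$ lands back in $K$ correctly identifies the only place the hypothesis is actually used.
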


\begin{lemma}\label{id} Let $F_n$ be the free group of finite rank $n$ with free basis $X=\{x_1,...,x_n\}$ and let $\phi\in \mathrm{End}(F_n)$ be an endomorphism that acts on $X$ by conjugating each basis element $x_i$ with some word in $X$, that is $$\phi(x_i)=\alpha_i^{-1}x_i\alpha_i,\ \alpha_i\in F_n.$$ Then $\phi$ acts trivially on all nilpotent quotients $F_n^{(k)}/F_n^{(k+1)}$ for $k\in \IN$, where $F_n^{(k)}$ denotes the $k$th term of the lower central series of $F_n$.
\end{lemma}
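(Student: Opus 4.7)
The plan is to prove the stronger claim that for every $k \ge 1$ and every $w \in F_n^{(k)}$, one has $\phi(w)w^{-1} \in F_n^{(k+1)}$, by induction on $k$. A preliminary observation (needed throughout) is that $\phi$ preserves the lower central series: since $\phi$ is a homomorphism and $F_n^{(k+1)} = [F_n, F_n^{(k)}]$ is generated by commutators, one has $\phi(F_n^{(k)}) \subseteq F_n^{(k)}$ for every $k$. For the base case $k=1$, the identity $\phi(x_i) = \alpha_i^{-1} x_i \alpha_i$ shows that $\phi(x_i)$ and $x_i$ coincide in the abelianization $F_n / F_n^{(2)}$; since $\phi$ is a homomorphism, this extends to $\phi(w) \equiv w \pmod{F_n^{(2)}}$ for every $w \in F_n$.

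For the inductive step, assuming the claim for all indices $j \le k$, I would reduce to showing $\phi([g,h]) \equiv [g,h] \pmod{F_n^{(k+2)}}$ for all $g \in F_n$ and $h \in F_n^{(k)}$, since such commutators generate $F_n^{(k+1)}$ and a product expansion together with the normality of $F_n^{(k+2)}$ then handles arbitrary $w \in F_n^{(k+1)}$. Using the inductive hypothesis at levels $1$ and $k$, write $\phi(g) = g \delta_g$ with $\delta_g \in F_n^{(2)}$ and $\phi(h) = h \delta_h$ with $\delta_h \in F_n^{(k+1)}$. Then $\phi([g,h]) = [g\delta_g, h\delta_h]$, and I would expand this via the standard commutator identities
\[
[ab,c] = [a,c]^b [b,c], \qquad [a,bc] = [a,c][a,b]^c.
\]
Invoking the filtration inclusion $[F_n^{(i)}, F_n^{(j)}] \subseteq F_n^{(i+j)}$, every correction term appearing lies in one of $[F_n, F_n^{(k+1)}]$, $[F_n^{(2)}, F_n^{(k)}]$, or $[F_n^{(k+1)}, F_n^{(2)}]$, each of which is contained in $F_n^{(k+2)}$. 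This leaves $[g\delta_g, h\delta_h] \equiv [g,h] \pmod{F_n^{(k+2)}}$, completing the inductive step.

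The main obstacle I anticipate is the bookkeeping in the commutator manipulation: one must carefully identify which correction terms appear after each application of the two commutator identities and verify each lies at the required depth in the filtration. A more conceptual alternative would be to invoke Magnus's theorem that the associated graded $\mathrm{gr}(F_n) = \bigoplus_{k \ge 1} F_n^{(k)}/F_n^{(k+1)}$ is the free Lie ring on the degree-one generators $\bar x_1, \dots, \bar x_n$: any $\phi \in \mathrm{End}(F_n)$ induces a Lie ring endomorphism $\mathrm{gr}(\phi)$ on $\mathrm{gr}(F_n)$, and the hypothesis $\phi(x_i) \equiv x_i \pmod{F_n^{(2)}}$ makes $\mathrm{gr}(\phi)$ the identity in degree one; since a Lie ring endomorphism of a free Lie ring is determined by its action on generators, $\mathrm{gr}(\phi)$ is the identity in every degree, which is the statement of the lemma.
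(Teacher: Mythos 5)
Your primary argument is the same approach as the paper's: an induction that expands commutators via Hall--Witt identities and discards the error terms using the filtration inclusion $[F_n^{(i)},F_n^{(j)}]\subseteq F_n^{(i+j)}$. The cosmetic differences are that the paper works with commutators $[\omega^{(k)},x_i]$ against a single basis element and packages the needed reductions in Lemma~\ref{identities2}, whereas you take arbitrary generators $[g,h]$ with $g\in F_n$, $h\in F_n^{(k)}$, which makes the generating-set step tautological (it is the definition $F_n^{(k+1)}=[F_n,F_n^{(k)}]$) rather than resting on a Hall-basis fact, and you flag the filtration inclusion explicitly where the paper silently drops terms. Both are correct and carry the same content; the commutator bookkeeping you flagged as an obstacle does go through exactly as you sketch. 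Your suggested alternative---passing to the associated graded $\mathrm{gr}(F_n)=\bigoplus_k F_n^{(k)}/F_n^{(k+1)}$, invoking Magnus's theorem that it is the free Lie ring on the degree-one classes $\bar x_1,\dots,\bar x_n$, noting that $\phi$ preserves the lower central series and hence induces a graded Lie ring endomorphism $\mathrm{gr}(\phi)$, and observing that $\mathrm{gr}(\phi)$ fixes every generator---is the genuinely different route. It eliminates all commutator manipulation and makes the lemma essentially immediate, at the price of importing a substantial structure theorem; the paper's hands-on induction is self-contained and elementary, which is presumably why it was chosen.
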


For the proof of Lemma \ref{id} we require the following commutator identities for elements in the nilpotent quotients.

\begin{lemma}\label{identities2} Let $x,y\in F_n, f\in F_n^{(k)}$, $k\in \IN$. Then it holds:
	\begin{itemize}\setlength{\itemindent}{0.5em}
	\item[(\ref{identities2}.1)] $[f,xy] \equiv [f,y][f,x] \bmod F_n^{(k+2)}$ 
	\item[(\ref{identities2}.2)] $[f, yxy^{-1}]\equiv [f,x] \bmod F_n^{(k+2)}$
	\end{itemize}
\end{lemma}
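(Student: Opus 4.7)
My plan is to prove both identities from a single well-known commutator identity together with the centrality of the graded pieces of the lower central series. Concretely, I will use the identity
\[
[a,bc] \;=\; [a,c]\cdot\bigl(c\,[a,b]\,c^{-1}\bigr),
\]
which follows by direct expansion from the definition $[a,b]=aba^{-1}b^{-1}$. The key structural fact I will invoke is that the graded piece $F_n^{(k+1)}/F_n^{(k+2)}$ is central in $F_n/F_n^{(k+2)}$, since by definition of the lower central series $[F_n, F_n^{(k+1)}] = F_n^{(k+2)}$.

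For identity (\ref{identities2}.1), I set $a=f\in F_n^{(k)}$, $b=x$, $c=y$ in the displayed identity. Then $[f,x]\in F_n^{(k+1)}$, so conjugation by $y$ fixes $[f,x]$ modulo $F_n^{(k+2)}$ by the centrality observation above, giving
\[
[f,xy] \;=\; [f,y]\cdot\bigl(y\,[f,x]\,y^{-1}\bigr) \;\equiv\; [f,y]\,[f,x] \pmod{F_n^{(k+2)}}.
\]
Applied inductively (or, equivalently, just noting that the map $z\mapsto [f,z]$ descends to a homomorphism from $F_n$ to the abelian group $F_n^{(k+1)}/F_n^{(k+2)}$), this also yields $[f,z^{-1}]\equiv[f,z]^{-1}\pmod{F_n^{(k+2)}}$; I will spell this out by applying (\ref{identities2}.1) to $[f, yy^{-1}]=[f,1]=1$.

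For identity (\ref{identities2}.2), I apply (\ref{identities2}.1) twice to the product $yxy^{-1}$:
\[
[f,yxy^{-1}] \;\equiv\; [f,y^{-1}]\,[f,yx] \;\equiv\; [f,y^{-1}]\,[f,x]\,[f,y] \pmod{F_n^{(k+2)}}.
\]
Then $[f,y^{-1}]\equiv[f,y]^{-1}$ by the previous paragraph, and since all three factors lie in the abelian group $F_n^{(k+1)}/F_n^{(k+2)}$, the factors $[f,y]^{-1}$ and $[f,y]$ cancel, leaving $[f,yxy^{-1}]\equiv[f,x]\pmod{F_n^{(k+2)}}$.

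There is no real obstacle here; the only thing to be careful about is the bookkeeping of which terms of the lower central series each element lies in, so that the centrality of $F_n^{(k+1)}/F_n^{(k+2)}$ can legitimately be invoked to replace the conjugate $c[a,b]c^{-1}$ by $[a,b]$. Everything else is mechanical manipulation of the commutator identity.
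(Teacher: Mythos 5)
Your argument is correct and follows essentially the same route as the paper. Where the paper invokes the Witt--Hall identity $[f,xy]=[f,y][f,x][f,x,y]$ (in the convention $[a,b]=a^{-1}b^{-1}ab$) and discards $[f,x,y]=[[f,x],y]\in F_n^{(k+2)}$, you instead expand in one step and absorb the conjugation $y[f,x]y^{-1}$ using centrality of $F_n^{(k+1)}/F_n^{(k+2)}$; these are the same structural fact dressed slightly differently, and your deduction of $[f,y^{-1}]\equiv[f,y]^{-1}$ and the cancellation in (\ref{identities2}.2) mirror the paper exactly.

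One small correction of bookkeeping: with your stated convention $[a,b]=aba^{-1}b^{-1}$, the exact expansion is
\[
[a,bc]=[a,b]\cdot\bigl(b\,[a,c]\,b^{-1}\bigr),
\]
not $[a,c]\cdot\bigl(c\,[a,b]\,c^{-1}\bigr)$ — you have interchanged the roles of $b$ and $c$. This does not affect the lemma, because you immediately pass to the abelian quotient $F_n^{(k+1)}/F_n^{(k+2)}$ where $[f,x][f,y]$ and $[f,y][f,x]$ agree and the conjugating element is irrelevant; but as a genuine identity in $F_n$ your displayed formula is false, so you should either fix the order or note explicitly that you only assert it modulo $F_n^{(k+2)}$.
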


\begin{proof} By the Witt-Hall identities \cite[Ch. 5.2]{Magnus} we have $$[f,xy] = [f,y][f,x][f,x,y] \equiv [f,y][f,x] \bmod F_n^{(k+2)}$$ since $[f,x,y]=[[f,x],y] \equiv 1 \bmod F_n^{(k+2)}$. Thus using (\ref{identities2}.1) provides \begin{align*}
		[f,y^{-1}xy]\equiv [f, xy][f,y^{-1}] &\equiv [f,y][f,x][f,y^{-1}] \bmod F_n^{(k+2)}.	\end{align*} 	
	Moreover, since $F_n^{(k+1)}/F_n^{(k+2)}$ is Abelian, it follows that
	\begin{align*}
		[f,y][f,x][f,y^{-1}] \equiv [f,y][f,y^{-1}][f,x] \stackrel{(\ref{identities2}.1)}{\equiv}[f, y^{-1}y][f,x]\equiv [f,x]\bmod F_n^{(k+2)}.
	\end{align*}
\end{proof}

\begin{proof}[Proof of Lemma \ref{id}]
	We give a proof by induction. Observe that we can write the image $\phi(x_i)$ of any basis element $x_i$ as a product of the form $x_iC_i$ for some commutator $C_i\in F_n^{(2)}$ since $$\phi(x_i) =\alpha_i^{-1}x_i\alpha_i = x_ix_i^{-1}\alpha_i^{-1}x_i\alpha_i = x_i[x_i,\alpha].$$ It follows that $\phi(x_i)\equiv x_i \bmod F_n^{(2)}$ and thus $\phi=1 \in \mathrm{End}(F_n/F_n^{(2)})$. Therefore assume the assertion is true for $k\in \IN$ such that $$\phi[\omega^{(k-1)},x_i] \equiv [\omega^{(k-1)},x_i] \bmod F_n^{(k+1)}$$ given $\omega^{(k-1)}\in F_n^{(k-1)}$, which is equivalent to   $\phi = 1\in \mathrm{End}(F_n^{(k)}/F_n^{(k+1)})$. We thus have to verify that for $\omega^{(k)}\in F_n^{(k)}$ it holds that $$\phi[\omega^{(k)},x_i]\equiv [\omega^{(k)},x_i]\bmod F_n^{(k+2)}.$$ Observe that $\phi[\omega^{(k)},x_i]=[\omega^{(k)} f^{(k+1)},\alpha_i^{-1}x_i\alpha_i]$ for $f^{(k+1)} \in F_n^{(k+1)}$. We therefore have:
	\begin{align*} [\omega^{(k)},\alpha^{-1}x_i\alpha_i][\omega^{(k)},\alpha^{-1}x_i\alpha_i,f^{(k+1)}]&[f^{(k+1)},\alpha^{-1}x_i\alpha_i] \equiv \\ \equiv &[\omega^{(k)},\alpha^{-1}x_i\alpha_i][f^{(k+1)},\alpha^{-1}x_i\alpha_i] \\ \equiv &[\omega^{(k)},\alpha^{-1}x_i\alpha_i] \bmod F_n^{(k+2)}.\end{align*}
	Finally, using Lemma \ref{identities2} (\ref{identities2}.2) we obtain \begin{align*}
		[\omega^{(k)},\alpha^{-1}x_i\alpha_i]\equiv [\omega^{(k)},x_i]\bmod F_n^{(k+2)} 
	\end{align*}
	which shows that $\phi[\omega^{(k)},x_i]\equiv [\omega^{(k)},x_i]\bmod F_n^{(k+2)}$ as desired. The claim follows.
	
\end{proof}

By using Lemma \ref{id} we observe the following:

\begin{lemma}\label{epi_miss}
	Let $\phi\in \mathrm{End}(F_n)$ be the endomorphism $\phi(x_i) = \alpha_i^{-1}x_i\alpha_i$, $\alpha_i\in F_n$. Then the induced endomorphism $\phi\in \mathrm{End}(F_n/F_n^{(k)})$ is surjective for all $k\in \IN$. \end{lemma}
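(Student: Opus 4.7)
The plan is to prove the statement by induction on $k$, combining the short exact sequence coming from the lower central series filtration with the two-out-of-three principle of Lemma \ref{2out3}. The heavy lifting has already been done in Lemma \ref{id}, so the remaining task is essentially a bookkeeping argument.

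For the base case $k=2$, the quotient $F_n/F_n^{(2)}$ is the abelianization $\IZ^n$, and since $\phi(x_i) = \alpha_i^{-1} x_i \alpha_i \equiv x_i \bmod F_n^{(2)}$, the induced map is the identity, hence surjective. For the inductive step, I would suppose $\phi$ is surjective on $F_n/F_n^{(k)}$ and consider the short exact sequence
$$1 \to F_n^{(k)}/F_n^{(k+1)} \to F_n/F_n^{(k+1)} \to F_n/F_n^{(k)} \to 1.$$
Each term of the lower central series is fully characteristic in $F_n$ (by a straightforward induction: if $\psi \in \End(F_n)$ preserves $F_n^{(k)}$, then it sends a generator $[g,x]$ of $F_n^{(k+1)}$ to the commutator $[\psi(g),\psi(x)] \in [F_n^{(k)},F_n] = F_n^{(k+1)}$), so $\phi$ descends to compatible endomorphisms on all three groups.

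By Lemma \ref{id}, the restriction of $\phi$ to $F_n^{(k)}/F_n^{(k+1)}$ is the identity and therefore surjective, while the induced endomorphism on $F_n/F_n^{(k)}$ is surjective by the inductive hypothesis. Applying Lemma \ref{2out3} with $G = F_n/F_n^{(k+1)}$ and $K = F_n^{(k)}/F_n^{(k+1)}$ yields surjectivity of $\phi$ on $F_n/F_n^{(k+1)}$, closing the induction. The only potential obstacle is verifying that the restriction of $\phi$ to $F_n^{(k)}/F_n^{(k+1)}$ is well-defined and that Lemma \ref{2out3} applies in this setting, but both points follow immediately from the fully characteristic nature of the lower central series terms noted above.
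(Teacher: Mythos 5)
Your proof is correct and follows essentially the same route as the paper: induct on $k$ using the short exact sequence $1 \to F_n^{(k)}/F_n^{(k+1)} \to F_n/F_n^{(k+1)} \to F_n/F_n^{(k)} \to 1$, invoke Lemma~\ref{id} for the identity action on the successive quotients, and close with the two-out-of-three principle of Lemma~\ref{2out3}. The only difference is that you take a moment to verify explicitly that the lower central series terms are fully characteristic (which the paper leaves implicit), a reasonable addition that makes the application of Lemma~\ref{2out3} airtight.
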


\begin{proof}
	First observe that the quotient $F_n/F_n^{(k)}$ fits into a short exact sequence for all $k\in \IN$:
$$1\to F_n^{(k)}/F_n^{(k+1)}\to F_n/F_n^{(k+1)}\to F_n/F_n^{(k)}\to 1.$$ 

We give a proof by induction on $k$. For $k=1$ there is nothing to show. Thus consider the short exact sequence 

$$1\to F_n^{(2)}/F_n^{(3)}\to F_n/F_n^{(3)}\to F_n/F_n^{(2)}\to 1.$$ 

By Lemma \ref{id} we know that $\phi$ acts trivially on both outermost groups $F_n/F_n^{(2)}$ and $F_n^{(2)}/F_n^{(3)}$. Lemma \ref{2out3} thus implies that $\phi \in \mathrm{End}(F_n/F_n^{(3)})$ is surjective. For the induction step assume that $\phi\in \mathrm{End}(F_n/F_n^{(k)})$ is surjective for $k\in \mathbb N$ and consider the short exact sequence 

$$1\to F_n^{(k)}/F_n^{(k+1)}\to F_n/F_n^{(k+1)}\to F_n/F_n^{(k)}\to 1.$$ 

Again, by Lemma \ref{id} we know that $\phi$ is the identity on $F_n^{(k)}/F_n^{(k+1)}$. The exact same argument as before establishes that  $\phi\in \mathrm{End}(F_n/F_n^{(k+1)})$ is surjective, which shows the claim.
\end{proof}	

The following result now provides a negative answer to Question \ref{q1}. Going forward, recall that for any choice of a basis $X=\{x_1,...,x_n\}$ for $F_n$, we can assign to any cyclically reduced word $\omega\in F_n$ of length $l$ an associated Whitehead graph $\mathrm{Wh}(\omega,X)$ whose $2l$ vertices are labelled by $x_i,x_i^{-1}$ for each occurring letter $x_i$ in $\omega$, so that for any two successive letters $x_ix_j$ there is an edge joining $x_i$ to $x_j^{-1}$. The reader who is not familiar with the notion of Whitehead graphs should consult \cite{Whitehead_Stallings}. 

\begin{prop}
	There exists a nontrivial cyclically reduced word $\alpha_i\in F_n$ such that the endomorphism $\phi\colon F_n\to F_n,\ \phi(x_i)= \alpha_i^{-1}x_i\alpha_i$ that conjugates some basis element $x_i$ by $\alpha_i$ is not an epimorphism of $F_n$.\end{prop}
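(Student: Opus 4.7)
The strategy is to exhibit a concrete counterexample. Working first in $F_2 = \langle x, y\rangle$, I take $\alpha_1 := y$ and $\alpha_2 := x$ (both nontrivial and cyclically reduced) and define $\phi(x) := y^{-1}xy$ and $\phi(y) := x^{-1}yx$. For general $n \ge 2$, one extends by $\phi(x_j) := x_j$ (so $\alpha_j = 1$) for $j \ge 3$; since the retraction $F_n \to F_2$ killing $x_3, \ldots, x_n$ sends $\phi(F_n)$ onto $\langle y^{-1}xy, x^{-1}yx\rangle$, it suffices to prove that the latter is a proper subgroup of $F_2$.

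To verify this, I compute the Stallings graph of $H := \langle y^{-1}xy,\, x^{-1}yx\rangle$ via folding. Starting from the wedge based at $v_0$ of two three-edge loops spelling $y^{-1}xy$ and $x^{-1}yx$, the only available folds identify the pair of $y$-edges into $v_0$ and the pair of $x$-edges into $v_0$. The resulting graph has three vertices and four edges -- a basepoint $v_0$ with no outgoing positive edges, together with two auxiliary vertices each carrying one self-loop (an $x$-loop at one, a $y$-loop at the other) and one edge into $v_0$ -- and admits no further folds. Since this graph is not isomorphic to the rose $R_2$, we conclude $H \ne F_2$. One can read off the same conclusion directly: any closed walk at $v_0$ in the folded graph must begin by traversing an inverse edge, so every reduced word representing an element of $H$ begins with $x^{-1}$ or $y^{-1}$, and in particular $x, y \notin H$.

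Combining with Lemma \ref{epi_miss}, which shows that every endomorphism of the form $\phi(x_i) = \alpha_i^{-1}x_i\alpha_i$ induces an epimorphism on each nilpotent quotient $F_n/F_n^{(k)}$, then gives the desired negative answer to Question \ref{q1}: the sequence $\{F_n/F_n^{(k)}\}_{k \in \IN}$ is an exhausting sequence of fully characteristic quotients of $F_n$ (by Magnus's residual nilpotence of free groups) on which $\phi$ acts by epimorphisms, while $\phi$ itself is not surjective. The only genuinely nontrivial step is the Stallings folding computation above; conceptually it is the main obstacle, but it reduces to a finite check that must be executed carefully to confirm that no further folds are possible and that the folded graph really has three vertices rather than one.
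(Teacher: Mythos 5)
Your Stallings folding computation is correct, and this gives a genuinely different and more elementary route than the paper. The paper proceeds via Whitehead graph theory: it chooses $\alpha_1$ cyclically reduced with $\mathrm{Wh}(\alpha_1,X)$ connected and without cut vertices (and with prescribed first and last letters), shows that the image $\phi(x_1x_2)$ has a Whitehead graph containing $\mathrm{Wh}(\alpha_1,X)$ as a subgraph, invokes the Whitehead--Stallings cut-vertex criterion to conclude $\phi(x_1x_2)$ is not primitive, and finishes using transitivity of $\Aut(F_n)$ on primitives plus Hopficity. Your approach bypasses all of that with an explicit finite fold, which is a perfectly valid and in fact cleaner way to exhibit a single non-surjective example once one is content with a concrete word; the paper's approach has the advantage of producing an easily parametrized infinite family of such $\alpha_1$.

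There is, however, a mismatch with the proposition as actually stated and proved in the paper: there the endomorphism $\phi$ conjugates a \emph{single} basis element $x_1$ by a nontrivial $\alpha_1$ and fixes all other generators, whereas your $\phi$ conjugates \emph{two} generators ($x_1$ by $x_2$, and $x_2$ by $x_1$). Observe that conjugating only one generator by a word in the \emph{other} generators can never work (if $\alpha_1\in\langle x_2,\dots,x_n\rangle$ then one recovers $x_1=\alpha_1\phi(x_1)\alpha_1^{-1}$, so $\phi$ is surjective), which is exactly why you needed the second twist; the paper instead ensures $\alpha_1$ involves $x_1$. Your argument still yields the negative answer to Question~\ref{q1}, since Lemma~\ref{epi_miss} applies to any family of conjugations, but to match the statement you would want a single nontrivial $\alpha_1$. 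An easy fix in your framework: take $\alpha_1 = x_2 x_1$ and $\phi(x_j)=x_j$ for $j\ge 2$; the same kind of folding shows $\langle \alpha_1^{-1}x_1\alpha_1,\ x_2\rangle = \langle x_1^{-1}x_2^{-1}x_1x_2x_1,\ x_2\rangle$ folds to a three-vertex graph with a self-loop, hence is a proper rank-$2$ subgroup of $F_2$. (Note this $\alpha_1$ does not satisfy the paper's Whitehead-graph hypothesis, so the paper's sufficient criterion is strictly stronger than necessary; your fold-based argument covers it directly.)
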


\begin{proof}
	Let $X=\{x_1,...,x_n\}$ be the fixed basis of $F_n$. Consider the Nielsen automorphism that acts on the primitive element $x_1$ via $$x_1\mapsto x_1x_2,\ x_i\mapsto x_i,\ i\not=1$$ such that it carries $X$ to the basis $X'=\{x_1x_2,x_2,...,x_n\}$. Let $\phi\in \mathrm{End}(F_n)$ be an endomorphism that conjugates  $x_1$ by a nontrivial $\alpha_1\in F_n$ while leaving all other basis elements invariant, i.e. $$x_1\mapsto \alpha_1^{-1}x_1\alpha_1,\ x_i\mapsto x_i,\ i\not= 1,\ 1\not=\alpha_1\in F_n.$$ It follows that $\phi$ acts on $X'$ via $$\phi(x_1x_2) = \alpha_1^{-1} x_1\alpha_1x_2,\ \phi(x_i) = x_i,\ i=2,...,n.$$ 
	
	Choose $\alpha_1 =x_{i_1}x_{i_2}\cdots x_{i_k}$ to be a cyclically reduced word such that its Whitehead graph $\mathrm{Wh}(\alpha_1,X)$ is connected and admits no cut vertex and subject to the condition that $\alpha_1$ satisfies $x_{i_1}\not= x_1^{\pm 1},\ x_{i_k}=x_1^{-1}$. For instance, for $n\ge 2$ we may choose (cf. Figure \ref{no_cut}) the word $$\alpha_1 = x_nx_{n-1}\cdots x_2x_1x_n^{-2}x_{n-1}^2\cdots x_2^{-1}x_1^{-1}.$$

\begin{figure}[H]
\centering
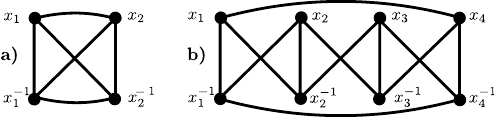
\caption{The Whitehead graph $\mathrm{Wh}(\alpha_1,X)$ in Figure a) for the cylically reduced word $\alpha_1 = x_2x_1x_2^{-2}x_1^{-2}$ in the rank two free group $F\langle x_1,x_2\rangle$ and in Figure b) for $\alpha_1 = x_4x_3x_2x_1x_4^{-2}x_3^{-2}x_2^{-2}x_1^{-2}$ in $F\langle x_1,x_2,x_3,x_4\rangle$. Both graphs are connected and admit no cut vertices.}
\label{no_cut}
\end{figure}

As a consequence, $\phi(x_1x_2)$ is cyclically reduced and the Whitehead graph $$\mathrm{Wh}(\alpha_1^{-1}, X)=\mathrm{Wh}(\alpha_1,X)$$ is a subgraph of $\mathrm{Wh}(\phi(x_1x_2), X)$. Indeed, since $\alpha_1$ is cyclically reduced, the relation $x_{i_k}=x_1^{-1}$ establishes that $\phi(x_1x_2)$ is cyclically reduced and that $\mathrm{Wh}(\phi(x_1x_2),X)$ contains a copy of $\mathrm{Wh}(\alpha_1,X)$ since the subsequence $x_{i_1}^{-1}x_1$ in the image $$\phi(x_1x_2)=\left(x_{i_k}^{-1}x_{i_{k-1}}^{-1}\cdots x_{i_2}^{-1}x_{i_1}^{-1}\right)x_1\Big(x_{i_1}x_{i_2}\cdots x_{i_k}\Big)x_2$$ generates the final edge of $\mathrm{Wh}(\alpha_1^{-1},X)$ inside $\mathrm{Wh}(\phi(x_1x_2),X)$ that might otherwise be missed upon concatenation in $\phi(x_1x_2)$. Furthermore, both conditions $x_{i_1}\not= x_1^{\pm 1},\ x_{i_k}=x_1^{-1}$ ensure that no undesired cancellation occurs in $\phi(x_1x_2)$. The absence of cut vertices and the connectedness from $\mathrm{Wh}(\alpha_1,X)$ is inherited by $\mathrm{Wh}(\phi(x_1x_2))$. By \cite[Corollary 2.5]{Whitehead_Stallings} it follows that $\phi(x_1x_2)$ is not separable which implies that $\phi$ does not preserve primitivity. Since $\Aut(F_n)$ acts transitively on primitive elements and
 $F_n$ is Hopfian, it follows that $\phi$ is not surjective. \end{proof}

\section{Lifting Closed Curves on Surfaces to Towers of Covers}\label{sec5}

We now consider a compact orientable surface $\Sigma$ of positive genus $g$ (possibly with boundary) with a fixed basepoint $s\in \Sigma$ in the interior and negative Euler characteristic $\chi(\Sigma)$. For the purpose of our results we do not distinguish between boundary components and punctures. In the punctured case we usually denote the genus $g$ surface with $b$ boundary components by $\Sigma_g^b$ and identify its fundamental group $\pi_1(\Sigma_g^b,s)$ with the free group $F_{2g+b-1}$ by fixing a basis $\{x_1,...,x_{2g+b-1}\}$ for $n=2g+b-1$. For any cover $\Sigma'$ of $\Sigma$ we denote by $\overline{\Sigma'}$ the closed surface that is obtained from $\Sigma'$ by filling in the punctures.\\

 To fix some notation, a \textit{closed curve} $\gamma$ on a surface $\Sigma$ will be any continuous map $\gamma\colon S^1\to \Sigma$ and we always understand $\gamma$ to be identified with its image. We say that the closed curve $\gamma$ is \textit{simple} if it's an embedding and if it is not based on a chosen basepoint in $\Sigma$, otherwise we refer to the (simple) closed curve $\gamma$ as (simple) \textit{loop} instead.  We say that the closed curve $\gamma\in \pi_1(\Sigma)$ is \textit{nonperipheral} if it is not freely homotopic to a closed curve in the neighbourhood of a boundary component. An \textit{arc} $\alpha\subset \Sigma$ is a continuous map $\alpha\colon [0,1]\to \Sigma$. We allow the arc $\alpha$ to intersect itself and accordingly say that $\alpha$ is \textit{simple} if it's an embedding.  \\

 Given any two closed curves $x,y$ on $\Sigma$, we will denote by $i(x,y)$ the \emph{geometric intersection number} of $x,y$, i.e. the minimal number of intersections of representatives of the free homotopy classes of $x$ and $y$ respectively. The \emph{algebraic intersection number} $\widehat\iota(\cdot,\cdot)$ is the alternating non-degenerate bilinear form on $H_1(\Sigma,\IZ)$ that is defined on homology classes of two transverse oriented simple closed curves on $\Sigma$ by counting the intersection indices according to the orientation of the respective curves. We will always assume any closed curve to be nonhomotopic to powers of peripheral simple closed curves unless otherwise stated. \\

The following result is the Abelian cover version of a well known theorem of \cite{Scott1}, \cite{Scott2}. Our proof is based on a proof-sketch in an unpublished preprint of Hensel \cite{Hensel}. We have contributed by extending the arguments and working out details where we considered necessary. A version for $p$-group covers of the following result can be found in \cite[Theorem 3.3]{Boggi}.

\begin{lemma}\label{scott} Let $\Sigma$ be a closed surface of positive genus $g$ and let $x\in \pi_1(\Sigma,s)$ be any loop. Then there exists some iterated finite Abelian cover $\Sigma'$ so that the elevation of $x$ to $\Sigma'$ does not have transverse self-intersections.
\end{lemma}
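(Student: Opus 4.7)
The plan is to proceed by induction on the minimal number $N$ of transverse self-intersections of $x$ in its free homotopy class, realized by the geodesic representative of $x$ with respect to a hyperbolic metric on $\Sigma$ (which exists since $g\ge 1$). For $N=0$ take $\Sigma'=\Sigma$; for $N\ge 1$ I would construct a single iterated finite Abelian cover $\Sigma_1\to\Sigma$ whose elevation of $x$ has strictly fewer than $N$ transverse self-intersections, and then iterate at most $N$ times to reach the desired tower.

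For the reduction step, I would pick a transverse self-intersection of $x$ at a point $p$, giving a decomposition $x = \alpha\cdot\beta$ in $\pi_1(\Sigma,p)$ into the two based sub-loops obtained by cutting $x$ at $p$. A direct analysis tracking lifts in the universal cover shows that in a regular Abelian cover $\Sigma_1\to\Sigma$ with deck group $\Gamma$, this self-intersection contributes exactly $k$ self-intersections to the elevation of $x$ if $[\alpha]\in \langle [x]\rangle \le \Gamma$ and none otherwise, where $k$ is the order of $[x]$ in $\Gamma$. To reduce the total count, I would aim for a cover in which $k=1$ (so $x$ lifts to a loop) while simultaneously arranging that at least one of the homology classes $[\alpha_1],\dots,[\alpha_N]$ corresponding to the $N$ self-intersections is nontrivial in $\Gamma$. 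Such a cover can be built as an iterated finite Abelian cover: first ascend to a finite Abelian cover in which $[x]$ lies in the defining subgroup (e.g., taking as kernel a subgroup of the form $\IZ[x]+q\IZ^{2g}$ inside $H_1(\Sigma;\IZ)=\IZ^{2g}$ for suitable $q$), so that $x$ lifts to a loop there; then within that cover, apply a further mod $q$ homology cover where some $\alpha_i$ acquires nontrivial homological image. The residual nilpotence of surface groups guarantees that such a separating cover exists after finitely many iterations, even when some $\alpha_i$ initially lies in a deep term of the derived series.

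The hard part will be controlling the multiplier $k$: in a single mod $q$ homology cover the order $k$ of $[x]$ is typically $q$, and this factor multiplies the contribution of every surviving self-intersection to the elevation, so a naive single-step reduction may increase rather than decrease the self-intersection count. This forces the construction to be genuinely iterated, first arranging $k=1$ (at the cost of an extra Abelian cover) and only then separating a single $\alpha_i$ from $\langle x\rangle$. Following the proof-sketch in Hensel's unpublished preprint~\cite{Hensel}, the detailed bookkeeping is best handled in the universal cover: each transverse self-intersection of $x$ corresponds to a bigon formed between the lift $\widetilde{x}$ and a translate $g\cdot\widetilde{x}$ for some $g\in\pi_1(\Sigma)$, and the self-intersections of the elevation correspond to $\pi_1(\Sigma_1)$-orbits of these bigons. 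The induction terminates when all bigons have been eliminated, yielding an elevation without transverse self-intersections.
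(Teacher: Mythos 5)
Your overall inductive framework is the same as the paper's: reduce the transverse self-intersection count of the elevation through a tower of finite Abelian covers, one intersection at a time.  The observation you extract in the universal cover — that in an Abelian cover with deck group $\Gamma$, a self-intersection at $p$ decomposing $x=\alpha\beta$ contributes $k$ self-intersections to the elevation if $[\alpha]\in\langle[x]\rangle\le\Gamma$ and none otherwise, where $k$ is the order of $[x]$ in $\Gamma$ — is correct and is a clean reformulation of what the paper achieves more combinatorially through its ``simple subloop $y\subset x$'' mechanism and Observation~1.  You also correctly identify the resulting tension: one should work with covers where $x$ lifts to a loop (so $k=1$), and then one needs some $[\alpha_i]\neq 0$ in $\Gamma$.

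However, the step you delegate to ``residual nilpotence of surface groups'' is precisely where the paper has to do real work, and your invocation does not close the gap.  Residual nilpotence (or residual solvability, which is the property actually aligned with iterated \emph{Abelian} covers) lets you separate $\alpha_i$ from the identity in some finite quotient; what you actually need is to separate $\alpha_i$ from the subgroup $\langle x\rangle$ in a finite \emph{Abelian} quotient in which $x$ itself dies.  Such a quotient exists at a given level iff $[\alpha_i]\notin\langle[x]\rangle\subseteq H_1(\Sigma_k;\IZ)$, i.e.\ iff $[\alpha_i]$ is not an integer multiple of $[x]$ (this includes the case $[\alpha_i]=0$).  If $[\alpha_i]=m_i[x]$ for every $i$, your construction stalls: passing up one more Abelian level and writing $\alpha_i (x')^{-m_i}=\gamma_i\in[\pi_1,\pi_1]$ does not by itself prevent the relation $[\alpha_i]\in\langle[x]\rangle$ from persisting (e.g.\ $[\gamma_i]$ could again be a multiple of $[x']$ one level up), and you give no argument that the obstruction eventually disappears.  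Appealing to the LERF property of surface groups at this point would be nearly circular, since cyclic subgroup separability for surface groups is itself a close cousin of Scott's theorem.  This hard case — $[\alpha_i]$ a multiple of $[x]$ or nullhomologous — is exactly what the paper handles explicitly via its Observations~2 and~3 (bounding pairs, band-summing a separating curve into one of intersection $2$ and algebraic intersection $0$, and the resulting order-$2$ cyclic cover) and the case split 2A/2B.  Your proposal is not wrong so much as incomplete: it correctly reduces the problem to a separation statement but replaces the paper's surface-topological constructions with an assertion that does not follow from the residual property you cite.
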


The proof of Lemma \ref{scott} uses various observations from covering space theory which are discussed below. We will show the existence of the desired tower of covers inductively while successively decreasing the self-intersection number of lifts of $x$. 

\begin{obs} Let $x\subset \Sigma$ be a loop that is not simple and let $y$ be a simple  subloop of $x$. If $\Sigma'\to \Sigma$ is a cover to which $x$ lifts but $y$ does not, then a lift $x'\subset \Sigma'$ of $x$ has a smaller self-intersection number than $x$. Compare e.g. \cite[Lemma 2.1]{Malestein} for a detailed treatment of this observation. \end{obs}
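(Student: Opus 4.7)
The plan is to parameterise $x$ as a map $S^1 \to \Sigma$ and count transverse self-intersections as unordered pairs $\{s_1,s_2\}$ of distinct parameters with $x(s_1)=x(s_2)$. The first step is the general principle that any self-intersection pair $\{s_1,s_2\}$ of the lift $x'$ projects to a self-intersection pair of $x$: applying the covering map $p$ to $x'(s_1)=x'(s_2)$ yields $x(s_1)=x(s_2)$. This provides a canonical injection from the set of self-intersection pairs of $x'$ into the set of self-intersection pairs of $x$, which already gives the weak inequality for free. The remaining work is to exhibit at least one self-intersection pair of $x$ that is missing from the image.

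To produce such a pair, I would single out the self-intersection corresponding to the simple subloop $y$. Write $y = x|_{[t_1,t_2]}$ for parameters $t_1<t_2$ with $x(t_1)=x(t_2)=q$, so that $y$ is a based loop at $q$, simple by hypothesis. Lifting $x$ to $x'$ starting at a chosen basepoint $\tilde s \in p^{-1}(s)$, set $q_1':=x'(t_1)$ and $q_2':=x'(t_2)$, both lying in the fibre $p^{-1}(q)$. By uniqueness of path lifting, the restriction $x'|_{[t_1,t_2]}$ is precisely the lift of the loop $y$ starting at $q_1'$ and ending at $q_2'$.

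The key step is then the following: because $y$ does not lift to a closed loop in $\Sigma'$ (in particular not at $q_1'$), this unique lift cannot close up, so $q_1'\neq q_2'$. Hence the pair $\{t_1,t_2\}$ is \emph{not} a self-intersection pair of $x'$, and it is resolved upon passing to the cover. Combined with the injection of the first step, this yields that the self-intersection count of $x'$ is strictly smaller than that of $x$. Transversality of the remaining self-intersections is preserved automatically because $p$ is a local homeomorphism, so no new tangential coincidences are introduced.

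The main subtlety I would flag is that the hypothesis ``$y$ does not lift'' must be interpreted basepoint-sensibly: one really needs the lift of $y$ starting at the particular preimage $q_1'$ dictated by our choice of $\tilde s$ to fail to close. In the intended application (the regular Abelian covers used in Lemma \ref{scott}) this is automatic, because lifting closed at one preimage of $q$ is equivalent to lifting closed at every preimage, so the informal phrasing is unambiguous; in the general case one should either specify the relevant basepoint in $p^{-1}(q)$ or restrict to regular covers.
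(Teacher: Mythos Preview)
Your argument is correct and is the standard one: parametrise, observe that self-intersection pairs of $x'$ inject into those of $x$ via the identity on parameters, and then use the non-lifting of $y$ to show that the particular pair $\{t_1,t_2\}$ cutting out $y$ is resolved upstairs. Your remark on the basepoint issue is well taken and exactly the right caveat; since the covers in play throughout Section~\ref{sec5} are regular (indeed Abelian), the phrase ``$y$ does not lift'' is unambiguous there.

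Note that the paper does not actually supply its own proof of this observation: it only cites \cite[Lemma~2.1]{Malestein} for the details. Your write-up is precisely the kind of argument that reference contains, so there is no alternative approach to compare against.
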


\begin{obs}\label{obs2} Let $x\subset \Sigma$ be a separating simple closed curve. Then there exists an Abelian cover $\Sigma'\to \Sigma$, so that elevations of $x$ to $\Sigma'$ are not separating. There are various ways to see this. Recall that the separating simple closed curves are contained in finitely many $\Mod(\Sigma)$-orbits, parametrised by the topological type of the splitting of the surface $\Sigma$ they induce (cf. \cite[1.3]{Primer}). Therefore assume that $x$ is a separating curve of some fixed genus $g'$. Consider a bounding pair of curves $y,z$ of the same genus $g'$ as $x$. Let $\{\alpha_1,\beta_1,...,\alpha_g,\beta_g\}$ be the standard configuration of curves for $H_1(\Sigma_g,\IZ)$. We can modify $y,z$ to a new curve $w$ of genus $g'$ by band-summing so that $w$ has geometric intersection $i(w,\alpha_g)=2$ and algebraic intersection $\widehat\iota([w],[\alpha_g])=0$. Band-summing has no effect in homology and thus $[w]=[y]+[z] = 0$, showing that $w$ is a separating curve of genus $g'$ and therefore an element of the same $\Mod(\Sigma)$-orbit of $x$. Now consider the order $2$ cyclic cover that is obtained by counting algebraic intersection mod $2$ with $\alpha_g$,  defined by the map $$\pi_1(\Sigma)\to H_1(\Sigma,\mathbb{Z})\to \IZ_2,\ x\mapsto [x]\mapsto\widehat\iota([x],[\alpha_g])\bmod 2.$$Then $w$ elevates to $2$ copies of nonseparating simple closed curves (cf. Figure \ref{lift_figure}). 
\end{obs}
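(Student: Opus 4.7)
The plan is to prove the observation by reducing to a finite list of topological types via the mapping class group, constructing one favorable representative per type, and then transporting the resulting cover back along a homeomorphism. The statement is that any separating simple closed curve $x\subset \Sigma_g$ becomes non-separating in some finite Abelian cover, and my approach exploits the fact that separating simple closed curves are classified up to $\Mod(\Sigma)$ by the genus $g'\in\{1,\dots,\lfloor g/2\rfloor\}$ of one complementary piece.

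First, I would observe that if $w,x$ lie in the same $\Mod(\Sigma)$-orbit, say $\phi(w)=x$ for some homeomorphism $\phi$, and if $\pi_1(\Sigma)\to A$ defines an Abelian cover $\Sigma'\to\Sigma$ in which the elevations of $w$ are non-separating, then the composition $\pi_1(\Sigma)\xrightarrow{\phi_*^{-1}}\pi_1(\Sigma)\to A$ defines another Abelian cover in which the elevations of $x$ are non-separating, because $\phi$ lifts to a homeomorphism of the two covers (which are isomorphic as covering spaces up to the choice of classifying map) and sends elevations of $w$ to elevations of $x$ while preserving the (non-)separating property. Since there are only finitely many orbits, it suffices to build a suitable representative for each genus $g'$.

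Next, for each $g'$, I would construct an explicit representative $w$ using the bounding-pair trick. Choose two disjoint homologous non-separating simple closed curves $y,z\subset\Sigma$ that together bound a subsurface of genus $g'-1$ (equivalently, $[y]=[z]$ in $H_1(\Sigma,\IZ)$, so $[y]+[z]=0$ is achievable and the co-bounded subsurface has the correct topological type). Band-sum $y$ and $z$ to get a single connected separating simple closed curve $w$ of genus $g'$, i.e.\ in the same $\Mod(\Sigma)$-orbit as $x$. With the basis $\{\alpha_1,\beta_1,\dots,\alpha_g,\beta_g\}$ for $H_1(\Sigma,\IZ)$, I would further arrange the band to run transversely across $\alpha_g$, producing $i(w,\alpha_g)=2$ with opposite signs, so that $\widehat\iota([w],[\alpha_g])=[y]\cdot[\alpha_g]+[z]\cdot[\alpha_g]=0$ (consistent with $w$ being null-homologous). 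The degree of freedom in placing the band gives flexibility to realize these intersection numbers while keeping $w$ simple.

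Finally, I would take the order-two cyclic Abelian cover $\Sigma'\to\Sigma$ defined by
\[
\pi_1(\Sigma)\to H_1(\Sigma,\IZ)\to \IZ/2,\qquad \gamma\mapsto \widehat\iota([\gamma],[\alpha_g])\bmod 2,
\]
and verify that the elevations of $w$ are non-separating. Since $\widehat\iota([w],[\alpha_g])=0$, the curve $w$ lifts (rather than wraps around) to two disjoint copies in $\Sigma'$. The main obstacle, which I anticipate to be the technical heart of the argument, is to verify that these lifts are genuinely non-separating in $\Sigma'$. The cleanest way I see is to describe $\Sigma'$ geometrically by cutting $\Sigma$ along $\alpha_g$ and regluing two copies in the standard way, and then to track how the two transverse intersections of $w$ with $\alpha_g$ force each lift of $w$ to traverse both sheets, producing a loop in $\Sigma'$ with nonzero algebraic intersection against a suitably chosen lift of $\beta_g$; non-triviality in $H_1(\Sigma',\IZ)$ then forces the elevations to be non-separating. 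Combined with the MCG-transport step, this proves the observation.
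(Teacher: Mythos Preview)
Your proposal is correct and follows essentially the same route as the paper: reduce to one representative per $\Mod(\Sigma)$-orbit, build that representative by band-summing a bounding pair so that it meets $\alpha_g$ in two points of opposite sign, and pass to the order-two cyclic cover dual to $\alpha_g$. Two cosmetic points to tidy up: the genus of the bounding pair should be $g'$ (not $g'-1$) to land in the orbit of $x$ under the paper's convention, and in the double cover $\beta_g$ does not lift but rather \emph{elevates} to a single curve double-covering it, so phrase your nonseparating check in terms of that elevation (or, more simply, exhibit a curve in $\Sigma'$ meeting one lift of $w$ once).
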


\begin{figure}[H]
\centering
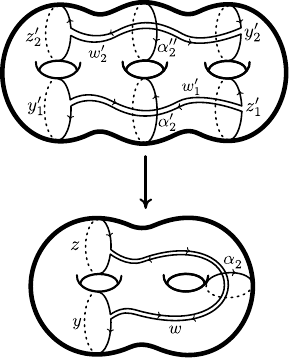
\caption{Band-summing a bounding pair $y,z$ of genus $g'=0$ to a separating curve $w$ of the same genus that lifts to nonseparating curves in the order $2$ cyclic cover. The homologous curves $y,z$ lift to nonhomologous curves $y_1',z_1'$.}
\label{lift_figure}
\end{figure}

\begin{obs}
	Suppose that $y,z$ are two disjoint simple closed curves so that $[z]=0$ or $[z]=[y]$. Then there exists an Abelian cover to which $y$ and $z$ lift but their lifts are not homologous and not nullhomologous. Namely, if $y,z$ are (disjoint) homologous nonseparating curves, then $y,z$ define a bounding pair. By Observation 2 it follows that there exists a finite cyclic cover $\Sigma'$ such that lifts $y',z'$ to $\Sigma'$ are no longer homologous (cf. Figure \ref{lift_figure}). If $z$ is separating, then $z$ lifts to a nonseparating curve in a finite cyclic cover by Observation $2$.  \end{obs}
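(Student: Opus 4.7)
The plan is to split the claim along its two disjunctive hypotheses and reduce each case to Observation~\ref{obs2}.

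Case 1 where $[z]=0$. Here $z$ is a nullhomologous simple closed curve, hence separating, so Observation~\ref{obs2} supplies a finite cyclic Abelian cover $\Sigma_1\to\Sigma$ in which a preferred elevation $z_1$ of $z$ is non-separating, i.e.\ $[z_1]\neq 0$ in $H_1(\Sigma_1,\IZ)$. In the same cover $y$ has a preferred elevation $y_1$; if $y$ is non-separating in $\Sigma$, then the relation $p_*[y_1]=k[y]$ (where $k$ is the degree with which $y_1$ projects onto $y$) forces $[y_1]\neq 0$, while if $y$ is also separating, one applies Observation~\ref{obs2} once more to a further Abelian cover to un-separate $y$ as well. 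If after these steps $[y_1]=[z_1]$, then $y_1,z_1$ are disjoint, homologous, non-separating curves in $\Sigma_1$, placing us in Case 2 one level upstairs; otherwise we are done.

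Case 2 where $[y]=[z]\neq 0$. Disjointness together with equality in homology forces $y$ and $z$ to cobound a subsurface, i.e.\ to form a bounding pair. Band-summing $y$ with an orientation-reversed copy of $z$ then yields a separating simple closed curve $w$ of some genus $g'$, lying in the same $\Mod(\Sigma)$-orbit as all separating curves of genus $g'$. The key point is a mild enhancement of Observation~\ref{obs2}: we construct an order-$2$ cyclic cover $\Sigma'\to\Sigma$ defined by mod-$2$ algebraic intersection with a non-separating simple closed curve $\alpha$ chosen so that $\widehat\iota([y],[\alpha])=\widehat\iota([z],[\alpha])=0$ (so that both $y$ and $z$ lift) and so that the $\Mod(\Sigma)$-representative argument of Observation~\ref{obs2} still puts $w$ in the configuration where it becomes non-separating upstairs. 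In $\Sigma'$, the band-summed curve $w$ lifts to a non-separating $w'$, and unwinding the band-sum identifies $[w']$ with $[y']-[z']$ for the preferred elevations $y',z'$ (with appropriate orientations), which forces $[y']\neq[z']$. Non-nullhomologousness of $[y']$ and $[z']$ then follows from the same transfer-type argument as in Case 1, using $[y],[z]\neq 0$.

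The principal obstacle lies in setting up the cover in Case 2: one must check that Observation~\ref{obs2}, which a priori only produces a cover in which \emph{some} separating curve of genus $g'$ becomes non-separating, can be specialised to our \emph{specific} band-summed $w$ while simultaneously maintaining the lifting conditions $\widehat\iota([y],[\alpha])=\widehat\iota([z],[\alpha])=0$. This amounts to using the mapping class group action to transport the auxiliary cover of Observation~\ref{obs2} to one tailored to our particular $w$, and verifying that this transport does not destroy the intersection conditions on $y$ and $z$. Everything else is routine covering-space bookkeeping.
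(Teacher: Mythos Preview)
Your proposal is correct and follows the same reduction as the paper: both cases are handled by invoking Observation~\ref{obs2}. The paper's own justification is extremely terse (it is stated as an observation with inline argument, not a formal proof), so your write-up is in fact more detailed than what appears there.

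The one place where you diverge is Case~2, and there you take an unnecessarily circuitous route. You band-sum the given bounding pair $y,z$ into a separating curve $w$, then invoke Observation~\ref{obs2} to un-separate $w$ upstairs, and finally unwind the band-sum to conclude $[y']\neq[z']$. This is what generates your ``principal obstacle'': Observation~\ref{obs2} produces a cover adapted to a particular representative in the $\Mod(\Sigma)$-orbit of $w$, and you then have to argue that transporting this cover back preserves the lifting conditions on $y$ and $z$. The paper avoids this detour entirely. The construction in Observation~\ref{obs2} (and its accompanying Figure~\ref{lift_figure}) already exhibits a specific bounding pair lifting to non-homologous, non-nullhomologous curves in the order-$2$ cyclic cover. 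Since bounding pairs of a fixed genus form a single $\Mod(\Sigma)$-orbit, you can transport \emph{that} cover directly to one adapted to your given pair $y,z$, without ever introducing $w$. The band-sum in Observation~\ref{obs2} was a device to prove something about separating curves; re-introducing it here to prove something about bounding pairs is going backwards.

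Your Case~1 bookkeeping (worrying about $y$ separating, or $[y_1]=[z_1]$ upstairs) is reasonable caution, though in the paper's actual application (Case~2A of Lemma~\ref{scott}) the curve $y$ is always taken with $[y]\neq 0$, so these sub-cases do not arise in practice.
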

	
We are now set to prove Lemma \ref{scott}.

\begin{proof}[Proof of Lemma \ref{scott}] Our goal is to show the existence of a finite regular cover $\Sigma'$ so that $x\subset \Sigma$ lifts to a simple closed curve by iterating through a tower of Abelian covers, while successively decreasing the number of self-intersections of lifts of $x$. In order to do so, we consider the following cases.

\begin{case}\label{case1} $[x]=0$. Suppose that there is any simple subloop $y\subset x$ such that $[y]\not= 0$. Then there exists a finite Abelian cover to which $y$ does not lift but $x$ does since $[x]=0$ is contained in the kernel of the defining epimorphism $\pi_1(\Sigma)\to \Gamma$ for any finite Abelian quotient $\Gamma$ that factors through the abelianization $H_1(\Sigma,\IZ)$. If there is no simple subloop $y$ so that $[y]\not=0$, we can pass to a cyclic cover $\Sigma'$ as in Observation \ref{obs2} to which $y$ lifts to a nonseparating simple subloop $y'$ of $x'$. If the lift of $x$ is nullhomologous in $\Sigma'$, the previous argument applies. Otherwise consider the following case.
\end{case}

\begin{case}\label{case2} $[x]\not=0$. A classical result from surface topology (see e.g. \cite[Prop. 6.2]{Primer}) states that every nontrivial homology class in $H_1(\Sigma,\mathbb{Z})$ is the multiple of some primitive class that is represented by a simple closed curve. Thus there exists a collection of simple closed curves $\{\alpha_i,\beta_i\}_i$ intersecting in the standard pattern so that we may assume without loss of generality that $x$ defines a multiple of $[\alpha_1]$ (cf. Figure \ref{primitive} for an example). In particular, $x$ has algebraic intersection nonzero only with $\beta_1$ since $\widehat\iota([\alpha_1],[\beta_1]) = 1$ and $\widehat\iota([\alpha_1],[\beta_j]) = 0$ for all $j\not= 1$. Thus if $[x]=m[\alpha_1]$ for some nonzero $m\in \mathbb{Z}$, we have
	$$\widehat\iota([x],[\beta_j]) = \widehat\iota(m[\alpha_1],[\beta_j]) = m\cdot \widehat\iota([\alpha_1],[\beta_j]) = \begin{cases}
		m, &\text{if} j = 1 \\ 0, &\text{if}\ j\not= 1.
	\end{cases}$$

	Now take a simple subloop $y\subset x$ with $[y]\not= 0$. Such a loop exists because otherwise $x$ would decompose into a collection of nullhomologous subcurves which contradicts $[x]\not= 0$.

\begin{figure}[H]
\centering
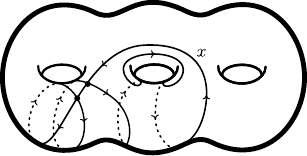
\caption{The closed curve $x=\alpha_1^2\beta_2\alpha_2\beta_2^{-1}\alpha_2^{-1}$ defines a multiple of $[\alpha_1]\in H_1(\Sigma,\IZ)$.}\label{primitive}
\end{figure}

If $y$ has nonzero algebraic intersection with any $\alpha_i$ or $\beta_j$, $j\not= 1$, then there is an Abelian cover to which $x$ lifts and $y$ does not and we are done. Indeed, assume without loss of generality that $\widehat\iota([y],[\alpha_2])=k\not= 0$. Then we may take the cyclic cover of order $l$ for $l,k$ coprime by counting modular algebraic intersection with $\alpha_2$. Since $\widehat\iota([x],[\alpha_2]) \bmod l =0$ but $\widehat\iota([y],[\alpha_2]) \bmod l \not= 0$ it follows that $x$ lifts but $y$ does not. Therefore assume that \begin{align}\widehat\iota([y],[\alpha_i])=\widehat\iota([y],[\beta_j])=0,\quad \forall \alpha_i,\beta_j,\ j\not=1.\end{align} Since $y$ is simple and $[y]\not=0$, equation (2) implies that $[y]=\pm[\alpha_1]$. Now if $[x]$ is a proper multiple of $[y]$, there exists a finite cyclic cover by counting algebraic intersection with $\beta_1$ to which $x$ lifts but $y$ does not and we are done. If $[x]$ is not a proper multiple of $[y]$, we continue by considering the following (sub)cases.\end{case} 
	 
	\textbf{Case 2A.} Consider a minimal subloop $z\subset x$ which intersects $y$ only in its endpoints and let $z'\subset z$ be a simple subloop. Assume $[z']\not= 0$ and $[z']\not=\pm[y]$. By acting with a suitable mapping class we may assume without loss of generality that $[z']=[\beta_j]$ for some $j\not=1$, say $j=2$. Then there exists a cyclic cover, obtained by counting algebraic intersection with $\alpha_2$ to which $x$ lifts but $z'$ does not, and we are done. Otherwise, if $[z']= 0$ or $[z']=\pm[y]$ we can apply Observation $3$ to find some Abelian cover $\Sigma'$ to which $z$ and $y$ lift but in this cover this situation will no longer occur and we exhausted all other possibilities. It now remains to consider the case in which $z$ and $y$ admit additional points of intersection.\\
	
	\textbf{Case 2B.} Assume the subloop $z$ intersects $y$ not just in its endpoints. If there is a simple subloop $z'\subset z$, then by Case 2A we are done. Otherwise there exists a minimal simple subarc $z'\subset z$ that intersects $y$ only in its (distinct) endpoints (cf. Figure \ref{case2b}) and returns to the same side of $y$. The endpoints of $z'$ divide $y$ into two simple subarcs $y',y''\subset y$. We may assume that $y'$ is the subarc that matches the orientation of $z'$ at the points of intersection, otherwise we pass to $y''$. Then $z'\cup y'$ defines an oriented simple subloop of $x$ and by the preceding arguments we find a finite cover to which $x$ lifts but $z'\cup y'$ does not, thus reducing the number of intersections of elevations of $x$. This finishes the proof.
	\end{proof}

\begin{figure}[H]
\centering
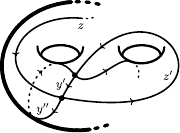
\caption{$z'\cup y'$ defines a simple subloop of $x$.}\label{case2b}
\end{figure}

	The following corollary now extends the results of Lemma \ref{scott} to surfaces with boundary.
	
	\begin{corollary}
		Let $\Sigma_g^b$ be a surface of positive genus $g$ and $b\ge 1$ boundary components and let $x\subset \Sigma_g^b$ be a nonperipheral closed curve. Then there exists some iterated finite regular cover of $\Sigma_g^b$ so that $x$ elevates to a simple closed curve.
	\end{corollary}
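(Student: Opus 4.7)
The plan is to reduce to the closed case by capping off the punctures/boundary components. Let $\overline{\Sigma_g^b} = \Sigma_g$ denote the closed surface of genus $g$ obtained from $\Sigma_g^b$ by filling in the boundary components (or punctures). Since $g\ge 1$ is positive, $\Sigma_g$ is a closed surface of positive genus and Lemma \ref{scott} applies. I would view $x$ as a loop in $\Sigma_g$ via the inclusion $\Sigma_g^b\hookrightarrow \Sigma_g$ and apply Lemma \ref{scott} to obtain an iterated finite Abelian cover $q\colon \overline{\Sigma'}\to \Sigma_g$ so that the elevation $\widetilde x\subset \overline{\Sigma'}$ of $x$ is a simple closed curve.

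Next, I would pull this cover back along the inclusion to define $\Sigma':= q^{-1}(\Sigma_g^b)\subset \overline{\Sigma'}$ and set $p:=q|_{\Sigma'}\colon \Sigma'\to \Sigma_g^b$. Since $q$ is finite and regular and the inclusion $\Sigma_g^b\hookrightarrow \Sigma_g$ is $\pi_1$-surjective, $p$ inherits the same deck group as $q$ and is therefore a finite regular cover; moreover, restricting each step of the Abelian tower that defines $q$ yields an analogous tower realizing $p$, so $p$ is itself an iterated finite regular cover of $\Sigma_g^b$.

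Because $x\subset \Sigma_g^b\subset \Sigma_g$ avoids the filled-in punctures, its elevation $\widetilde x$ avoids the $q$-preimage of those punctures, so $\widetilde x\subset \Sigma'$. The key observation is that the minimal power of $x$ whose lift to $\overline{\Sigma'}$ closes up agrees with the minimal power whose lift to $\Sigma'$ closes up: both are governed by the image of $[x]$ in the common deck group $\Gamma$ of $p$ and $q$. Hence $\widetilde x$ is precisely the preferred elevation of $x$ to $\Sigma'$, and it is simple there since it is simple in $\overline{\Sigma'}$.

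The only real point of care is the nonperipherality hypothesis, which is actually only needed to make the statement interesting: had $x$ been peripheral, it would become null-homotopic in $\Sigma_g$ after capping and the argument would still produce a cover where $\widetilde x$ is simple, but trivially so. The nonperipheral assumption guarantees that $x$ genuinely descends to a nontrivial loop in $\Sigma_g$, so that the conclusion of Lemma \ref{scott} is substantive. I do not anticipate a serious obstacle beyond carefully tracking basepoints and verifying that the iterated tower structure passes to the restriction.
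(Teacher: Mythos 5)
Your reduction-by-capping has a genuine gap, and it stems from the fact that the inclusion $\Sigma_g^b\hookrightarrow \Sigma_g$ is not $\pi_1$-injective. You claim in your closing paragraph that ``the nonperipheral assumption guarantees that $x$ genuinely descends to a nontrivial loop in $\Sigma_g$,'' but this is false: the kernel of $\pi_1(\Sigma_g^b)\to\pi_1(\Sigma_g)$ is the normal closure of the boundary curves, which is much larger than the set of peripheral elements. For instance, in the once-punctured torus $\Sigma_1^1$ with $\pi_1=F\langle a,b\rangle$ and boundary $[a,b]$, the element $[a^2,b]$ is nonperipheral but lies in the commutator subgroup and hence dies in $\pi_1(\Sigma_1)=\IZ^2$.

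This breaks the argument in two ways. First, if $x$ becomes nullhomotopic in $\Sigma_g$, then Lemma~\ref{scott} applied to $x$ in $\Sigma_g$ gives you nothing useful: the minimal-position representative of $x$ in $\Sigma_g$ is a trivial simple loop, which is a \emph{different curve} than the one you care about, and you cannot push simplicity of that representative back to the elevation of the original $x$ in $\Sigma'$. Second, and more decisively, even when $[x]\neq 1$ in $\pi_1(\Sigma_g)$, the self-intersection-reduction mechanism in the proof of Lemma~\ref{scott} works by finding subloops $y\subset x$ and passing to covers where $y$ does not lift. If a subloop $y$ of $x$ (nontrivial in $\pi_1(\Sigma_g^b)$, hence responsible for a genuine self-intersection of $x$ in minimal position on $\Sigma_g^b$) becomes nullhomotopic in $\Sigma_g$, then $y$ lifts to \emph{every} cover of $\Sigma_g$, so that self-intersection can never be resolved by any cover pulled back from $\Sigma_g$. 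The covers of $\Sigma_g^b$ that arise as restrictions of covers of the capped surface form a strictly smaller class than is needed. The paper's proof avoids exactly this pathology by passing to the double $D(\Sigma_g^b)$ rather than the capped surface: the inclusion $\Sigma_g^b\hookrightarrow D(\Sigma_g^b)$ is $\pi_1$-injective, so $x$ and all of its essential subloops remain nontrivial, and the self-intersection-reduction argument of Lemma~\ref{scott} applies without loss.
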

	
	\begin{proof}
	Let $D(\Sigma_g^b)$ denote the double of the surface $\Sigma_g^b$, i.e. the connected sum $\Sigma_g^b\# {\Sigma_g^b}'$ of $\Sigma_g^b$ and a homeomorphic copy ${\Sigma_g^b}'$ identified (possibly up to an orientation reversing homeomorphism) along their associated boundary curves (cf. Figure \ref{double}). Thus, $D(\Sigma_g^b)$ is a closed surface. Let $\delta=\delta_1\cup ...\cup \delta_b$ denote the simple closed curves on $D(\Sigma_g^b)$ that correspond to the identified boundary components. Then any finite Abelian cover $p\colon \widehat\Sigma\to D(\Sigma_g^b)$ is a closed surface and the arguments used in Lemma \ref{scott} that allow for decreasing the number of self-intersections upon elevating $x\subset D(\Sigma_g^b)$ are carried out analogously. We denote by $R_1,...,R_m$ the set of subsurfaces of $\widehat \Sigma$ bounded by $p^{-1}(\delta)=\widehat\delta_1\cup ...\cup\widehat\delta_l$. Then there exists some $i\in \{1,...,m\}$ such that $p|_{R_i}\colon R_i\to \Sigma_g^b$ is a finite cover with boundary to which $x$ elevates as a simple closed curve. This shows the corollary.	
\end{proof}

\begin{figure}[H]
\centering
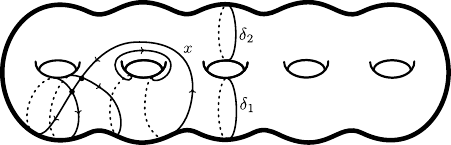
\caption{The double $D(\Sigma_2^2)$ of the genus $2$ surface with $2$ boundary components.}\label{double}
\end{figure}

	\begin{mydef}\label{submodule}
	Let $\Sigma$ now be a compact orientable surface of positive genus (possibly with non-empty boundary)   and let $\Sigma'\to \Sigma$ be a finite regular cover. In the punctured case, let $\Sigma'\hookrightarrow \overline{\Sigma'}$ the puncture filling map. We denote by $V_{x}^{\Sigma'}$ the submodule of the symplectic $\mathbb{Z}$-module $H_1(\overline{\Sigma'},\mathbb{Z})$ that is generated by the homology classes of elevations of a nonperipheral closed curve $x\subset \Sigma$ to $\Sigma'$, i.e. $$V_{x}^{\Sigma'}:=\mathrm{Span}_{\IZ}\{g[x']\mid g\in \Gamma, x'\ \text{preferred elevation}\}\subseteq H_1(\overline{\Sigma'},\mathbb{Z}).$$
	\end{mydef}
	Since $\Gamma$ acts canonically and transitively on the set of lifts of elevations of $x$, it follows that  $V_{x}^{\Sigma'}$ is a $\Gamma$-invariant submodule of $H_1(\overline{\Sigma'},\mathbb{Z})$. Going forward, we denote by $\mathcal S$ the class of finite regular covers of $\Sigma$. The following result is then the analog of \cite[Theorem 2.3]{Boggi} and highlights that covering towers are well adapted to detect whether two closed curves on a topological surface are disjoint.

\begin{lemma}\label{boggi}
Let $x,y$ be two (not necessarily distinct) closed curves on $\Sigma$. Then $x$ and $y$ are disjoint if and only if $\widehat\iota([x'],[y'])=0$ for all $[x'] \in V_{x}^{\Sigma'}$ and $[y'] \in V_{y}^{\Sigma'}$ for every $\Sigma'\in \mathcal S$.
\end{lemma}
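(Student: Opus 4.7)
Plan. For the forward implication, represent $x$ and $y$ by disjoint curves in $\Sigma$. Because any covering projection $p\colon \Sigma'\to\Sigma$ is a local homeomorphism, the preimages $p^{-1}(x)$ and $p^{-1}(y)$ are disjoint, so every elevation $x'$ of $x$ is disjoint from every elevation $y'$ of $y$ in $\Sigma'$ and hence in $\overline{\Sigma'}$. Disjoint closed curves on an oriented surface have zero algebraic intersection, and since $V_x^{\Sigma'}$ and $V_y^{\Sigma'}$ are by definition the $\IZ$-spans of the elevation classes, bilinearity of $\widehat\iota$ forces the pairing to vanish on $V_x^{\Sigma'}\times V_y^{\Sigma'}$ for every $\Sigma'\in\mathcal S$.

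For the converse I would argue by contrapositive: assume $i(x,y)>0$ and build a cover in $\mathcal S$ in which some pair of elevations pairs nontrivially. The construction proceeds in two stages. First, I apply Lemma \ref{scott} (or its Corollary in the bounded case) successively to $x$ and then to the elevation of $y$ in the resulting cover, and replace the (a priori non-regular) compositum by its normal closure to produce a finite regular cover $\Sigma_1\to\Sigma$ in which the preferred elevations $x_1$ of $x$ and $y_1$ of $y$ are both simple closed curves. Because the covering projection is a local homeomorphism and $x,y$ cannot be homotoped disjoint, $i(x_1,y_1)>0$ as well.

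If $\widehat\iota([x_1],[y_1])\neq 0$ in $H_1(\overline{\Sigma_1},\IZ)$ we are done. Otherwise the $2k=i(x_1,y_1)$ intersections of $x_1$ with $y_1$ pair into cancelling signs. Mimicking the techniques used in the proof of Lemma \ref{scott} (in particular Observations $2$ and $3$ to arrange $[x_1],[y_1]\neq 0$, combined with the minimal-arc construction of Case 2B), I would then produce an auxiliary primitive simple closed curve $z$ in $\Sigma_1$ whose class $[z]\in H_1(\Sigma_1,\IZ)$ distinguishes the positive from the negative crossings of $x_1\cap y_1$. More precisely, in the cyclic cover $\Sigma_2\to\Sigma_1$ dual to counting algebraic intersection modulo $m$ with $[z]$, the induced "sheet shift" at a positive crossing of $x_1\cap y_1$ would differ from that at a negative crossing, so the crossings redistribute among pairs of elevations $(x^{(i)},y^{(j)})$ in $\Sigma_2$ in such a way that at least one pair receives only crossings of a single sign. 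That pair then has $\widehat\iota([x^{(i)}],[y^{(j)}])\neq 0$, completing the proof.

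The principal obstacle is exhibiting the auxiliary class $[z]$ in this last step. The naive attempt, a double cover with $\phi(x_1)=0$ and $\phi(y_1)\neq 0$, does not suffice: its orientation-preserving deck involution exchanges the two lifts of $x_1$, so their algebraic pairings with any lift of $y_1$ coincide, and their common value must equal $\tfrac12 \widehat\iota(\pi^{*}[x_1],[Y])=\widehat\iota([x_1],[y_1])=0$. The required $[z]$ must therefore depend on the cyclic combinatorics of the crossings along $y_1$, and producing it cleanly is precisely where the Case 2A/2B arc-based constructions used in the proof of Lemma \ref{scott} reappear; once $[z]$ is in hand, the algebraic intersection in the resulting cover becomes a direct signed count.
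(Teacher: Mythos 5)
Your forward direction is the same as the paper's, and both arguments open the converse by applying Lemma \ref{scott} to obtain a cover with simple elevations of $x$ and $y$. (You are right that one must arrange this cover to be regular so as to stay inside $\mathcal S$, a detail the paper skips; but the correct device is the normal \emph{core} of the subgroup, i.e.\ the intersection of its conjugates — the normal closure is a larger subgroup and would give a \emph{smaller} cover, where simplicity of elevations need not persist.)

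The genuine gap is in the remaining step, and you have flagged it yourself. Once $\widehat\iota([x_1],[y_1])=0$ with $i(x_1,y_1)>0$, you want a class $[z]\in H_1(\Sigma_1,\IZ)$ whose dual cyclic cover sorts positive and negative crossings of $x_1\cap y_1$ onto different pairs of elevations. You correctly rule out the naive double cover, but you do not exhibit a working $[z]$, and it is not evident such a class exists in a single step: the sign pattern of crossings along $y_1$ is a piece of combinatorial data that need not be detected by pairing against any fixed homology class, and even when a suitable $[z]$ is found, the claim that crossings redistribute so that some pair sees only one sign requires an argument you do not give.

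The paper sidesteps the sign bookkeeping entirely by inducting on the \emph{geometric} intersection number. The vanishing of $\widehat\iota([x_1],[y_1])$ forces $x_1$ to return to the same side of $y_1$ somewhere, so there is a simple subarc $a\subset x_1$ meeting $y_1$ only at its endpoints, and a matching subarc $b\subset y_1$ such that $a\cup b$ is an oriented simple closed curve. Feeding $a\cup b$ into the Case 2A/2B machinery of Lemma \ref{scott} produces an Abelian cover to which $x_1$ and $y_1$ lift but $a\cup b$ does not, and this strips a pair of opposite-sign crossings off some pair of elevations, strictly decreasing the geometric intersection number. The induction bottoms out at geometric intersection $1$, where $\widehat\iota=\pm1$ automatically and no sign analysis is needed. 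The curve $a\cup b$ is essentially the auxiliary object your sketch was grasping for, but the productive way to exploit it is to remove the whole bigon at once in a cover, not to separate its two crossings between different elevation pairs. Adopting the paper's geometric-intersection induction closes your gap.
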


\begin{proof}
	The first direction is clear since if $x$ and $y$ are disjoint, then this is true for all elevations of $x$ and $y$. For the converse direction, let $x,y\subset \Sigma$ be two closed curves that are not disjoint and intersect in minimal position. By Lemma \ref{scott} there exists an iterated finite Abelian cover $\Sigma'\in \mathcal S$ such that all elevations of $x$ and $y$ to $\Sigma'$ are simple closed curves. Let $x'$ and $y'$ be two simple elevations of $x$ and $y$ respectively that are not disjoint on $\Sigma'$. Such elevations exist since the intersection of $x$ and $y$ in $\Sigma$ is witnessed by their elevations. Now if they have algebraic intersection number $\widehat\iota([x'],[y'])\not= 0$, we are done. If not, then we find some simple subarc $a \subset x'$ that intersects $y'$ only in its endpoints and returns to the same side of $y'$ (cf. Figure \ref{concat}). Let $b\subset y'$ be a subarc with the same endpoints as $a$ and with matching orientation at the common endpoints (compare the argument in Lemma \ref{scott}, Case 2B), then $a\cup b$ defines an oriented simple closed curve on $\Sigma'$. Following the arguments in the proof of Lemma \ref{scott} we then find an Abelian cover of $\Sigma'$ and elevations of $x'$ and $y'$ so that the geometric intersection number is strictly smaller than that of $x'$ and $y'$. Inductively we thus find an iterated cover $\widehat\Sigma\to \Sigma$ in which two simple elevations $\widehat x$ and $\widehat y$ of $x$ and $y$ respectively have geometric intersection $i(\widehat x,\widehat y)=1$ and thus algebraic intersection $\widehat\iota([\widehat x],[\widehat y])\not=0$. This finishes the proof. \end{proof}
	
		\begin{figure}[H]
\centering
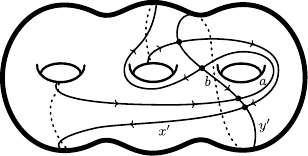
\caption{The two subarcs $a\subset x'$ and $b\subset y'$ form an oriented simple closed curve $a\cup b$ on $\Sigma'$.}\label{concat} 
\end{figure}
	
By taking $x=y$ in Lemma \ref{boggi} it follows that covering towers furthermore detect simplicity of closed curves which is the analog of \cite[Corollary 2.4]{Boggi}.

\begin{corollary}\label{boggi2}
	Let $x\subset \Sigma$ be a closed curve.  Then $x$ is freely homotopic to the power of a simple closed curve if and only if the symplectic submodule $V_{x}^{\Sigma'}$ is totally isotropic for every $\Sigma'\in \mathcal S$.
\end{corollary}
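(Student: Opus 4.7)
The plan is to apply Lemma \ref{boggi} in the diagonal case $y = x$, interpreting the notion of ``self-disjointness'' appropriately when $x$ is a power of a simple closed curve.

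First I would verify the forward direction directly. If $x$ is freely homotopic to $\gamma^k$ for some simple closed curve $\gamma \subset \Sigma$, then for any cover $\Sigma' \in \mathcal S$ the preimage of $\gamma$ in $\overline{\Sigma'}$ is a disjoint union of simple closed curves, and the image of every elevation of $x$ is supported on exactly one of these components. Hence for any two elevations $g_1 \cdot x'$ and $g_2 \cdot x'$ the underlying subsets of $\overline{\Sigma'}$ are either identical or disjoint: in the first case $\widehat\iota([g_1 \cdot x'], [g_2 \cdot x']) = 0$ follows from skew-symmetry of the intersection form, and in the second it is trivial. Since these classes generate $V_x^{\Sigma'}$, bilinearity gives total isotropy.

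For the reverse direction, assume $V_x^{\Sigma'}$ is totally isotropic for every $\Sigma' \in \mathcal S$. By Lemma \ref{scott} and its corollary, I would pass to an iterated regular cover $\Sigma' \in \mathcal S$ in which the preferred elevation $x'$ is a simple closed curve. Suppose for contradiction that two distinct elevations $g_1 \cdot x'$ and $g_2 \cdot x'$ meet transversely on $\Sigma'$. I would then run the argument of Lemma \ref{boggi} essentially verbatim on this pair: either their algebraic intersection is already nonzero (contradicting total isotropy), or I extract an oriented simple closed subloop $a \cup b$ of their union, with $a$ and $b$ subarcs returning to the same side of the opposite elevation with matching orientations, and pass to a suitable Abelian cover that separates $a \cup b$ while lifting both elevations. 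Iterating strictly reduces the geometric intersection number until it equals one, forcing algebraic intersection $\pm 1$ in some deeper cover $\widehat\Sigma \in \mathcal S$ and contradicting total isotropy of $V_x^{\widehat\Sigma}$.

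It follows that in some cover all distinct elevations of $x$ are pairwise disjoint simple closed curves. Any transverse self-intersection of $x$ in $\Sigma$ is witnessed by a transverse intersection between two distinct elevations in a sufficiently deep cover, so $x \colon S^1 \to \Sigma$ must have no transverse self-intersections; equivalently, $x$ is freely homotopic to a power of a simple closed curve. The main obstacle is this last descent step: one must check that the multiplicity of each self-intersection point of $x$ genuinely distributes among distinct $\Gamma$-translates of $x'$ in a sufficiently deep cover, so that pairwise disjointness of elevations upstairs really forces embeddedness of the image downstairs. This is essentially the inverse of the lifting argument in Lemma \ref{scott}, applied to the double points of $x$ rather than to subloops of $x$.
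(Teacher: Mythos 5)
Your approach is exactly the paper's: the paper proves Corollary \ref{boggi2} with the single observation that it is the diagonal case $x=y$ of Lemma \ref{boggi}, and your proposal just supplies the detail of how that specialization runs. The forward direction is correct as you state it, and the reverse direction is the Lemma \ref{boggi} argument with $x=y$.

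The ``obstacle'' you flag at the end is not a genuine gap, and you do not need any ``inverse of Lemma \ref{scott}.'' Once the cover $\Sigma'$ of Lemma \ref{scott} is fixed so that \emph{all} elevations of $x$ are simple closed curves, any transverse double point $p$ of a minimal-position representative of $x$ in $\Sigma$ is automatically witnessed by an intersection of two \emph{distinct} elevations over some preimage $\tilde p$ of $p$: a covering map is a local homeomorphism, so the two crossing branches at $p$ lift to two crossing branches at $\tilde p$, and if both branches lay on the same elevation that elevation would have a transverse self-intersection, contradicting simplicity. So once you have shown (by your contradiction argument via the Lemma \ref{boggi} machinery) that no two distinct elevations can intersect without eventually producing a nonzero algebraic intersection in some deeper cover, the only remaining option is that $x$ has no transverse double points at all, i.e. $x$ is freely homotopic to a power of a simple closed curve. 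The ``distribution'' of self-intersections among $\Gamma$-translates is therefore not an extra step to verify but an immediate consequence of the simplicity of the elevations provided by Lemma \ref{scott}.
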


\section{Detecting Surface Homeomorphisms in Free Homotopy Classes}\label{final_section}

In this final section we give a proof of Theorem \ref{mcg}. For the remainder of this section we consider a punctured surface $\Sigma_g^b$ that has the same homotopy type as $R_n$, so that we have an identification of $\Out(\pi_1(\Sigma_g^b))$ with $\Out(F_n)$. Now let $\mathcal S$ denote the class of finite characteristic covers of $\Sigma_g^b$. The choice of the surface $\Sigma_g^b$ determines a choice of a symplectic form on $H_1(\Sigma_g,\mathbb{Z})$ via the algebraic intersection pairing $\widehat\iota(\cdot,\cdot)$ which therefore endows the modules $H_1(\overline{\Sigma'},\mathbb{Z})$ with a symplectic structure for every cover $\Sigma'\to \Sigma_g^b$ in $\mathcal S$ (compare Definition \ref{submodule}). \\

Theorem \ref{mcg} gives a homological criterion to detect whether the homotopy equivalence of $R_n$ that represents $\psi\in \Out(F_n)$ is contained in the free homotopy class of a homeomorphism of $\Sigma_g^b$ via the virtual homology representations of $\Out(F_n)$: $$\rho_{\mathcal S}\colon \Out(F_n)\to \prod_{\Sigma'\in \mathcal S}\mathrm{GL}(H_1(\Sigma',\IZ))/\Gamma_{\Sigma'}.$$

Note that the action of $\Out(F_n)$ does not fix a basepoint in $\Sigma_g^b$ and lifts of homotopy equivalences are therefore only well defined up to the action of the deck group $\Gamma_{\Sigma'}$ for each $\Sigma'\in \mathcal S$. 

\begin{proof}[Proof of Theorem \ref{mcg}]
Let $f\colon R_n\to R_n$ be the homotopy equivalence that represents $\psi \in \Out(F_n)$ and suppose that $\psi$ acts on $H_1(\Sigma',\IZ)$ by preserving the algebraic intersection pairing for every cover $\Sigma'\in \mathcal S$ as stated in the theorem. As a consequence of Lemma \ref{boggi}, it follows that the trivial geometric intersection between any two disjoint curves on $\Sigma_g^b$ is preserved by $\psi$. Additionally, for any simple closed curve $x\subset \Sigma_g^b$, the symplectic submodule $V_x^{\Sigma'}$ of $H_1(\overline{\Sigma'},\IZ)$ is preserved as totally isotropic for every $\Sigma'\in \mathcal S$. By Corollary \ref{boggi2} it thus follows that $\psi$ preserves the conjugacy classes of peripheral simple closed curves on $\Sigma_g^b$, since peripheral elements are characterized by the property that they have trivial geometric intersection with any other closed curve. By the Theorem of Dehn-Nielsen-Baer for punctured surfaces (see \cite[Theorem 8.8]{Primer}) it now follows that $\psi$ is realized by a homeomorphism of $\Sigma_g^b$ that is homotopic to $f$. This proves the claim.
\end{proof}

\bibliographystyle{amsalpha}
\bibliography{references}

\end{document}